\documentclass[12pt]{article}
\usepackage{graphics}
\usepackage{amsmath}
\usepackage{amsfonts}
\usepackage{enumerate}
\usepackage{latexsym}
\usepackage{epsfig}

\setlength{\topmargin}{-0.3in} \setlength{\oddsidemargin}{0.2in}
\setlength{\evensidemargin}{0.2in} \setlength{\textheight}{8.8in}
\setlength{\textwidth}{5.9in}

\newtheorem{theorem}{Theorem}[section]
\newtheorem{lemma}[theorem]{Lemma}

\def\proofbox{\begin{picture}(6.5,6.5)
\put(0,0){\framebox(6.5,6.5){}}\end{picture}}
\newenvironment{proof}{\noindent{\it Proof.\quad}}{\hfill\proofbox}

\setlength{\parindent}{0.2in}





\begin{document}

\title{Simplicial Maps of the Complexes of Curves on Nonorientable Surfaces}
\author{Elmas Irmak}

\maketitle

\renewcommand{\sectionmark}[1]{\markright{\thesection. #1}}


\thispagestyle{empty}
\maketitle



\begin{abstract} Let $N$ be a compact, connected, nonorientable surface of genus $g$ with $n$ boundary components.
Let $\lambda$ be a simplicial map of the complex of curves, $\mathcal{C}(N)$, on $N$ which satisfies the following:
$[a]$ and $[b]$ are connected by an edge in $\mathcal{C}(N)$ if and only if $\lambda([a])$ and $\lambda([b])$ are
connected by an edge in $\mathcal{C}(N)$ for every pair of vertices $[a], [b]$ in $\mathcal{C}(N)$. We prove that
$\lambda$ is induced by a homeomorphism of $N$ if $(g, n) \in \{(1, 0), (1, 1), (2, 0)$, $(2, 1), (3, 0)\}$ or
$g + n \geq 5$. Our result implies that superinjective simplicial maps and automorphisms of $\mathcal{C}(N)$
are induced by homeomorphisms of $N$. \end{abstract}

\maketitle

{\small Key words: Mapping class groups, simplicial maps, nonorientable surfaces

MSC: 57M99, 20F38}

\section{Introduction}

Let $N$ be a compact, connected, nonorientable surface, and $\mathcal{C}(N)$ be the complex of curves of $N$. The author proved that superinjective simplicial of maps of $\mathcal{C}(N)$ are induced by homeomorphisms of $N$ in \cite{Ir5}. The referee of that paper suggested that the following stronger result should hold. In this paper we give a proof of the referee's statement. Our main result in this paper implies that superinjective simplicial maps and automorphisms of the complexes of curves on nonorientable surfaces are induced by homeomorphisms of these surfaces.
The main result is the following:

\begin{theorem} Let $N$ be a compact, connected, nonorientable surface of genus $g$ with
$n$ boundary components. Suppose that either $(g, n) \in \{(1, 0), (1, 1), (2, 0)$, $(2, 1), (3, 0)\}$ or $g + n \geq 5$.
If $\lambda : \mathcal{C}(N) \rightarrow \mathcal{C}(N)$ is a simplicial map satisfying: $[a]$ and $[b]$ are connected by an edge in
$\mathcal{C}(N)$ if and only if $\lambda([a])$ and $\lambda([b])$ are connected by an edge in $\mathcal{C}(N)$ for every pair of
vertices $[a], [b]$ in $\mathcal{C}(N)$, then $\lambda$ is
induced by a homeomorphism $h : N \rightarrow N$ (i.e $\lambda([a]) = [h(a)]$ for every vertex $[a]$ in $\mathcal{C}(N)$).\end{theorem}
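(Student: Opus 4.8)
The plan is to reduce the stated hypothesis to superinjectivity and then invoke the author's earlier theorem from \cite{Ir5} that every superinjective simplicial map of $\mathcal{C}(N)$ is induced by a homeomorphism. For distinct vertices the hypothesis reads: $[a]$ and $[b]$ are disjoint (adjacent) if and only if $\lambda([a])$ and $\lambda([b])$ are distinct and disjoint. The forward implication is simply that $\lambda$ is simplicial together with the fact that no vertex is adjacent to itself; the reverse implication says that $\lambda$ reflects disjointness, equivalently that $\lambda$ sends an intersecting pair to an intersecting (or equal) pair. Hence the only feature separating the hypothesis from superinjectivity is injectivity on vertices, and the core of the argument is to establish that $\lambda$ is injective.

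First I would observe that $\lambda$ is automatically injective on every edge: if $[a] \neq [b]$ are adjacent, then $\lambda([a])$ and $\lambda([b])$ are adjacent, hence distinct. Suppose then, for contradiction, that $\lambda([a]) = \lambda([b])$ for distinct, necessarily non-adjacent (so intersecting), vertices $[a], [b]$. For any $[c]$ adjacent to $[a]$ we get $\lambda([c])$ adjacent to $\lambda([a]) = \lambda([b])$, and the reverse implication forces $[c]$ adjacent to $[b]$; symmetrically every neighbor of $[b]$ neighbors $[a]$. Thus $a$ and $b$ would have identical links in $\mathcal{C}(N)$. It therefore suffices to prove a separation statement: for the surfaces in the theorem, distinct intersecting vertices have distinct links.

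I would prove this by producing, for intersecting $[a] \neq [b]$, a curve $c$ with $i(c,a)=0$ but $i(c,b)\neq 0$, so that $[c]$ lies in the link of $[a]$ but not of $[b]$. Concretely, cut $N$ along $a$ and use the complexity of the complement to find an essential curve disjoint from $a$ that meets $b$ essentially; the arcs in which $b$ traverses the cut surface cannot be dodged by all essential curves once the complement carries enough topology. The bound $g+n \geq 5$ guarantees exactly this, although one must run the construction through the possible topological types of $a$ on a nonorientable surface (one-sided, two-sided nonseparating, two-sided separating), keeping careful track of which neighborhoods are annuli and which are M\"obius bands. With injectivity established, $\lambda$ is a superinjective simplicial map, and \cite{Ir5} yields the homeomorphism; in particular this exhibits the superinjective case as a special instance, matching the claim in the abstract.

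The main obstacle is precisely the separation lemma in the intersecting case, and above all its verification for the small exceptional surfaces $(g,n) \in \{(1,0),(1,1),(2,0),(2,1),(3,0)\}$, whose complexes of curves are too sparse for the generic complement argument. For these I expect to argue directly, enumerating the few topological types of vertices together with their adjacency patterns, checking by hand that distinct vertices have distinct links and that the induced map is geometric; several of these complexes are small enough that the statement is nearly immediate. A secondary but pervasive technical point is the one-sided/two-sided dichotomy, since the combinatorial characterizations of curve types that make the link comparison succeed are exactly the features distinguishing this setting from the orientable one, and they must be invoked consistently when transferring the conclusion of \cite{Ir5}.
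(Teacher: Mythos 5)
Your proposal is correct in outline, and its opening is in fact the paper's own: the paper's Lemma 2.3 proves injectivity by exactly your link-separation argument (for distinct non-adjacent $[a]$, $[b]$, produce a curve $c$ adjacent to $[a]$ but not to $[b]$), and its Lemma 2.4 then converts the hypothesis into two-way preservation of disjointness, which is precisely superinjectivity, both under the restriction $(g,n)=(3,0)$ or $g+n \geq 4$. Where you genuinely diverge is the endgame. You invoke the superinjectivity theorem of \cite{Ir5} (Theorem 2.11 in the paper) as a black box; the paper never does this, and instead re-runs the \emph{proof} from \cite{Ir5}, first re-establishing under the connectivity hypothesis the geometric lemmas that proof needs: preservation of adjacency and non-adjacency with respect to top-dimensional pants decompositions (Lemmas 2.5--2.6) and preservation of $1$-sidedness of curves (Lemmas 2.7--2.10). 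Your reduction is logically sound and more economical---there is no circularity, since \cite{Ir5} is an independent prior result---whereas the paper's method-level re-derivation keeps the superinjective statement as a corollary rather than an input. Two caveats. First, your reduction only covers $(3,0)$ and $g+n \geq 5$, since the injectivity and disjointness lemmas are unavailable below that range; of the remaining cases, $(1,0)$, $(1,1)$ (one-vertex complexes) and $(2,0)$ (three vertices, using that the Klein bottle carries two disjoint, nonisotopic $1$-sided curves) are indeed immediate, but $(2,1)$ is not ``nearly immediate''---it requires the classification of curves on the once-holed Klein bottle, and it is exactly the case the paper quietly defers to the corresponding analysis in \cite{Ir5}. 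Second, a small imprecision: the forward direction of the hypothesis is not ``simply that $\lambda$ is simplicial,'' since a simplicial map may collapse an edge to a vertex; it is an additional assumption, though one you are handed, so nothing in your argument breaks.
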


We note that $g + n = 4$ case is open.

The extended mapping class groups are defined as the group of isotopy classes of self-homeomorphisms of the surface on orientable surfaces. Ivanov proved that the automorphism group of the curve complex is isomorphic to the extended mapping class group on orientable surfaces.
As an application he proved that isomorphisms between any two finite index subgroups are geometric. Ivanov's results were proven by Korkmaz
in \cite{K1} for small genus cases, and by Luo for all cases in \cite{L}. Classification of injective homomorphisms between mapping class groups was given by Ivanov-McCarthy \cite{IMc}. To see some results where the mapping class group was viewed as the automorphism group of some other complexes on orientable surfaces, see of results of  Schaller \cite{S}, Margalit \cite{M}, the author \cite{Ir3}, Brendle-Margalit \cite{BM1}, McCarthy-Vautaw\cite{MV}, Farb-Ivanov \cite{FIv}, Irmak-Korkmaz \cite{IrK}, Irmak-McCarthy \cite{IrM}.

Superinjective simplicial maps of complex of curves were defined on the orientable surfaces by the author in \cite{Ir1} as maps that preserve geometric intersection zero and nonzero properties of the vertices in the complex of curves. After proving that superinjective simplicial maps of the complexes of curves are induced by homeomorphisms of the surface, the author classified injective homomorphisms from finite index subgroups of the extended mapping class group to the whole group in \cite{Ir1}, \cite{Ir2}, \cite{Ir3}. Behrstock-Margalit and Bell-Margalit proved the author's results for small genus cases in \cite{BhM} and in \cite{BeM}. There are several results about superinjective simplicial maps and the applications to the mapping class groups given by Brendle-Margalit in \cite{BM1}, Kida in \cite{Ki1}, \cite{Ki2}, \cite{Ki3}, Kida-Yamagata in \cite{KiY1}, \cite{KiY2}, \cite{KiY3}. Shackleton proved that injective simplicial maps of the curve complex are induced by homeomorphisms in \cite{Sc}, and he obtained applications for mapping class groups.

On nonorientable surfaces the complex of curves also was viewed as the automorphism group of some complexes on surfaces. To see this, see the results of Atalan \cite{A}, the author \cite{Ir4}, Atalan-Korkmaz \cite{AK}.

\section{Simplicial maps that satisfy the connectivity property}

We will always assume that $N$ is a compact, connected, nonorientable surface of genus $g$ with $n$ boundary components.
The complex of curves, $\mathcal{C}(N)$, on $N$ is an abstract simplicial complex. The vertex set is the set of isotopy
classes of nontrivial simple closed curves. Nontrivial means that the curve does not bound a disk, a mobius band, and
it is not isotopic to a boundary component of $N$. A set of vertices forms a simplex if they can be represented by pairwise
disjoint simple closed curves on $N$.

We will say that a simplicial map $\lambda : \mathcal{C}(N) \rightarrow \mathcal{C}(N)$ satisfies the connectivity property
if the map satisfies the following: $[a]$ and $[b]$ are connected by an edge in $\mathcal{C}(N)$ if and only if $\lambda([a])$
and $\lambda([b])$ are connected by an edge in $\mathcal{C}(N)$ for every pair of vertices $[a], [b]$ in $\mathcal{C}(N)$.

We note that a simple closed curve is called 2-sided if its regular neighborhood is an annulus. It is called 1-sided if its regular
neighborhood is a Mobius band. We remind that for any two vertices $[a]$ and $[b]$ in $\mathcal{C}(N)$ the geometric intersection
number of $[a]$, $[b]$, $i([a], [b])$, is defined as the minimum number of points of $x \cap y$ where $x \in [a]$ and $y \in [b]$.

A set $P$ of pairwise disjoint, nonisotopic, nontrivial simple closed curves on $N$ is called a pair of pants decomposition
of $N$ if each component of the cut surface $N_P$ is a pair of pants.
Let $a$ and $b$ be two distinct elements in a pair of pants decomposition $P$. Then $a$ is called adjacent to $b$ w.r.t. $P$
iff there exists a pair of pants in $P$ which has $a$ and $b$ on its boundary. The set of isotopy classes of elements of $P$,
$[P]$, gives a maximal simplex of $\mathcal{C}(N)$. The vertex set of every maximal simplex of $\mathcal{C}(N)$ is equal to
$[Q]$ for some pair of pants decomposition $Q$ of $N$.

On orientable surfaces all maximal simplices have the same dimension $3g_1 + n_1 -4$ where $g_1$ is the genus and
$n_1$ is the number of boundary components of the orientable surface. On nonorientable surfaces there are different
dimensional maximal simplices. In Figure \ref{Fig0} we show some pair of pants decompositions on a closed, nonorientable 
surface of genus seven. They correspond to different dimensional maximal simplices in the complex of curves. The cross signs 
in the figure means that the interiors of the disks which have cross signs in them are removed and the antipodal points of 
the resulting boundary components are identified.

The following lemma is given in \cite{A}, \cite{AK}:

\begin{figure}
\begin{center}
\epsfxsize=2in \epsfbox{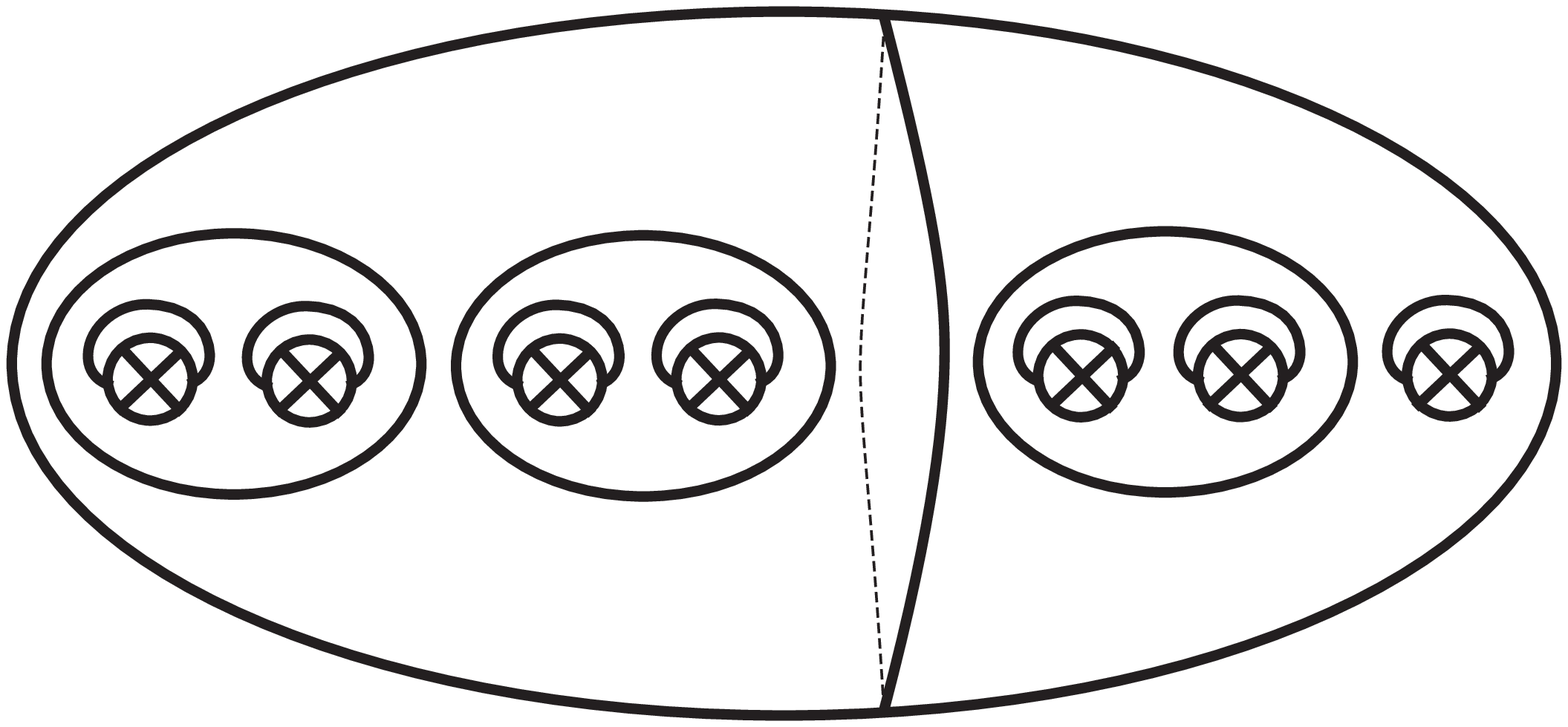} \hspace{0.1in}  \epsfxsize=2in \epsfbox{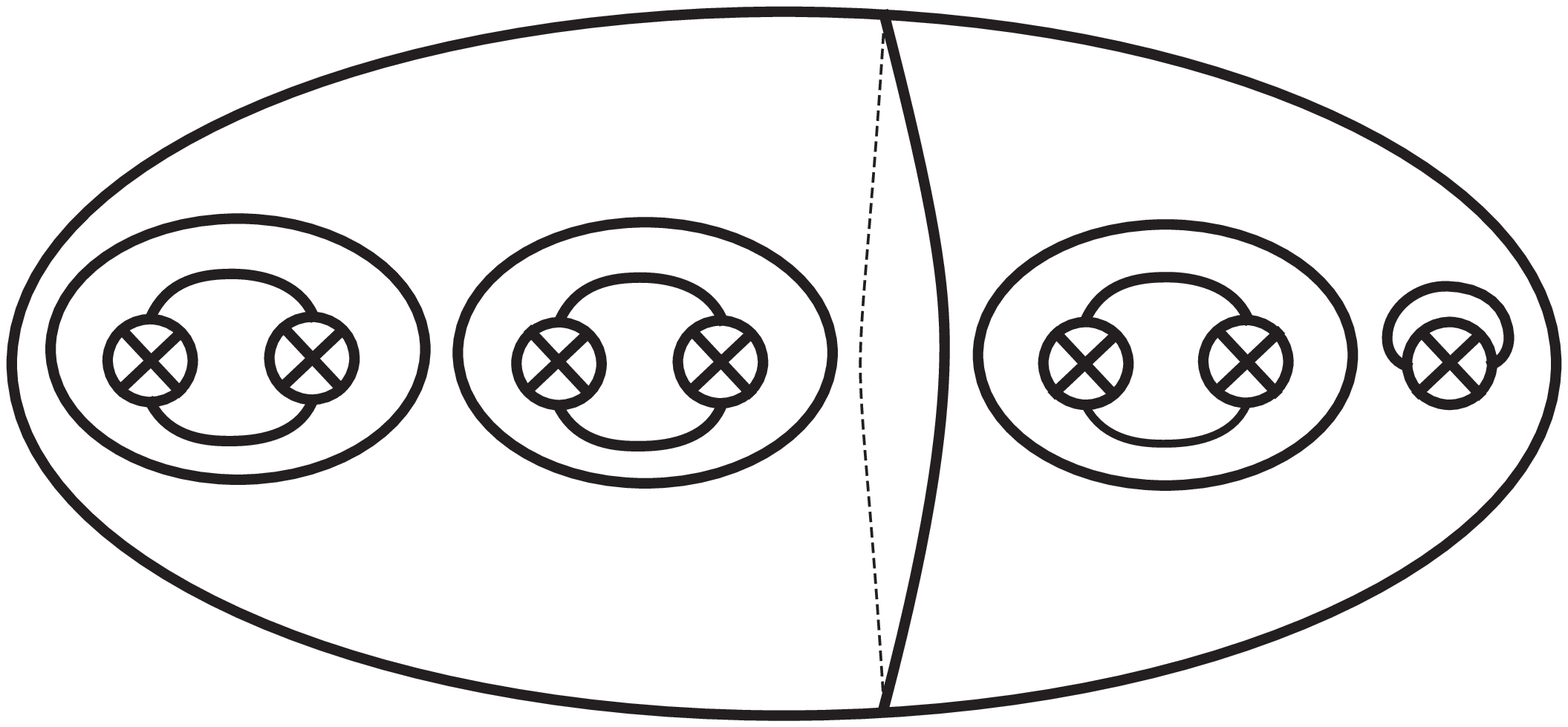}
\caption{Pants decompositions on a surface of genus 7}
\label{Fig0}
\end{center}
\end{figure}

\begin{lemma}
\label{dim} Let $N$ be a nonorientable surface of genus $g \geq 2$ with $n$ boundary components. Suppose that $(g, n) \neq (2, 0)$.
Let $a_r= 3r+n-2$ and $b_r = 4r +n -2$ if $g = 2r+1$, and let $a_r= 3r+n-4$ and $b_r= 4r +n -4$ if $g =2r$.
Then there is a maximal simplex of dimension $q$ in $\mathcal{C}(N)$ if and only if $a_r \leq q \leq b_r$.\end{lemma}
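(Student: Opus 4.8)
The plan is to translate the statement about simplex dimensions into a statement about the possible numbers of one-sided curves in a pair of pants decomposition. By the discussion preceding the lemma, the maximal simplices of $\mathcal{C}(N)$ are exactly the sets $[P]$ for pants decompositions $P$, and the dimension of $[P]$ is $|P|-1$, where $|P|$ is the number of curves in $P$. So I would first express $|P|$ in terms of the number $s$ of one-sided curves and the number $t$ of two-sided curves of $P$. Since cutting along a simple closed curve leaves the Euler characteristic unchanged and each pair of pants has Euler characteristic $-1$, the number of pairs of pants in $N_P$ equals $-\chi(N)=g+n-2$; note this is positive precisely because $(g,n)\neq(2,0)$ under our hypotheses. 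Counting the $3(g+n-2)$ boundary circles of these pants according to their origin — each of the $n$ boundary components of $N$ accounts for one, each two-sided curve for two (cutting doubles it), and each one-sided curve for one (cutting a one-sided curve produces a single boundary circle double-covering it) — gives $3(g+n-2)=n+2t+s$, hence $2t+s=3g+2n-6$. Therefore $|P|=s+t=\tfrac12(s+3g+2n-6)$ and the dimension is
\[
q=|P|-1=\tfrac12\,(s+3g+2n-8).
\]
In particular $q$ is an increasing affine function of $s$, rising by exactly $1$ each time $s$ increases by $2$.

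Next I would pin down the possible values of $s$. Integrality of $|P|$ forces $s+3g+2n-6$ to be even, that is $s\equiv g\pmod 2$. The number of one-sided curves is at least $0$, and, using the standard fact that a nonorientable surface of genus $g$ carries at most $g$ pairwise disjoint, pairwise nonisotopic one-sided curves (each cut along such a curve drops the crosscap number), we get $0\le s\le g$. Combined with the parity constraint, $s$ ranges over $\{\varepsilon,\varepsilon+2,\dots,g\}$ with $\varepsilon=0$ if $g$ is even and $\varepsilon=1$ if $g$ is odd. Substituting the two extreme values into the displayed formula gives exactly the endpoints in the statement: $s=\varepsilon$ yields $q=a_r$ and $s=g$ yields $q=b_r$, in both parity cases $g=2r$ and $g=2r+1$.

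Because $q$ steps by $1$ as $s$ steps by $2$, it remains only to realize every admissible value of $s$ by an actual pants decomposition, and this is the main obstacle. I would build $N$ from blocks: one crosscap block contributing a single one-sided curve, together with genus-two blocks, each of which admits a pants decomposition with either $0$ or $2$ one-sided curves (for instance $N_{2,1}$, a genus-two surface with one boundary component, decomposes either with one two-sided curve or with two one-sided curves, since there $2t+s=2$). Using $\varepsilon$ such crosscap blocks and $\lfloor g/2\rfloor$ genus-two blocks, and distributing the $n$ boundary components, one obtains, for each choice of $0\le k\le\lfloor g/2\rfloor$ genus-two blocks decomposed with two one-sided curves, a pants decomposition with exactly $s=\varepsilon+2k$ one-sided curves; as $k$ runs over its range, $s$ takes every value in $\{\varepsilon,\varepsilon+2,\dots,g\}$. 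Verifying that each complementary region in these explicit decompositions is indeed a pair of pants is routine but is where the real care is needed. Combining this realization with the formula and the bounds shows that the set of dimensions of maximal simplices is exactly $\{a_r,a_r+1,\dots,b_r\}$, which is the assertion of the lemma.
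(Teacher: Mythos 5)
The paper itself offers no proof of this lemma; it is quoted from \cite{A} and \cite{AK}, so there is no internal argument to compare against, and your proposal must stand on its own. Its counting half does: using $\chi(N)=2-g-n$, the number of pants is $g+n-2$ (positive exactly because $(g,n)\neq(2,0)$ when $g\ge 2$), the boundary-circle count gives $2t+s=3g+2n-6$, hence $q=\frac{1}{2}(s+3g+2n-8)$, and together with the parity constraint $s\equiv g\pmod 2$, the bound $0\le s\le g$, and the paper's statement that maximal simplices are exactly the classes of pants decompositions, this correctly proves that every maximal simplex has dimension between $a_r$ and $b_r$; the endpoint computations check out in both parity cases.

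The gap is in the realization half, which you rightly call the main obstacle but then only sketch, and the sketch as written fails in some of the cases it must cover. For odd genus your ``crosscap block'' is a projective plane with one hole, i.e.\ a Möbius band; its boundary circle bounds a Möbius band in $N$ and is therefore a \emph{trivial} curve, so it can serve neither as a curve of a pants decomposition nor as a gluing curve of the block structure. This is fatal precisely where no boundary components of $N$ are available to thicken that block, e.g.\ for the closed surfaces $N_{3,0}$, $N_{5,0}$, $N_{7,0}$: there the odd crosscap must instead be absorbed into a central piece homeomorphic to $N_{1,k}$ with $k\ge 2$, decomposed by its core one-sided curve plus further curves (with $N_{3,0}$ handled directly by the decompositions $\{a,x\}$ and three disjoint one-sided curves). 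A similar degeneration occurs for even genus with $n=0$ and few blocks: for $N_{4,0}$ the central sphere-with-holes your scheme would use is an annulus, whose two gluing curves are isotopic, so one must instead glue the two copies of $N_{2,1}$ to each other along a single curve. All of this is fixable, and your dimension formula correctly converts each realized value of $s$ into the required $q$; but as stated the construction does not produce valid pants decompositions in these cases, so the direction asserting that every $q$ with $a_r\le q\le b_r$ actually occurs is not yet proved.
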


We remind that a simplicial map $\tau : \mathcal{C}(N) \rightarrow \mathcal{C}(N)$ is called superinjective
if it satisfies the following condition: If $[a], [b]$ are two vertices in $\mathcal{C}(N)$, then
$i([a], [b]) = 0$ if and only if $i(\tau([a]), \tau([b])) = 0$.

\begin{lemma}
\label{imply} Let $(g, n)= (3, 0)$ or $g + n \geq 4$. If $\tau: \mathcal{C}(N) \rightarrow \mathcal{C}(N)$ is a superinjective simplicial
map, then $\tau$ satisfies the connectivity property.
\end{lemma}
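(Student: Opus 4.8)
The plan is to reduce the asserted connectivity property to the \emph{injectivity} of $\tau$ on the vertex set of $\mathcal{C}(N)$, and to deduce injectivity from a separation property of curves that holds on the surfaces allowed by the hypothesis. The starting observation is the elementary fact that two vertices $[a]\neq[b]$ span an edge of $\mathcal{C}(N)$ if and only if $i([a],[b])=0$. Granting that $\tau$ is injective, both implications of the connectivity property follow at once. If $[a]$ and $[b]$ span an edge, then $[a]\neq[b]$ and $i([a],[b])=0$; superinjectivity gives $i(\tau([a]),\tau([b]))=0$ and injectivity gives $\tau([a])\neq\tau([b])$, so $\tau([a])$ and $\tau([b])$ span an edge. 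Conversely, if $\tau([a])$ and $\tau([b])$ span an edge, then $\tau([a])\neq\tau([b])$ forces $[a]\neq[b]$, and $i(\tau([a]),\tau([b]))=0$ combined with the superinjectivity equivalence gives $i([a],[b])=0$, so $[a]$ and $[b]$ span an edge. Thus everything rests on proving injectivity.

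To prove that $\tau$ is injective I would argue by contradiction, assuming $\tau([a])=\tau([b])$ with $[a]\neq[b]$, and split into two cases. If $i([a],[b])\neq 0$, then superinjectivity forces $i(\tau([a]),\tau([b]))\neq 0$, which is impossible since $i(\tau([a]),\tau([b]))=i(\tau([a]),\tau([a]))=0$. If instead $i([a],[b])=0$, so that $a$ and $b$ are distinct and disjoint, I would produce a vertex $[c]$ with $i([c],[a])=0$ and $i([c],[b])\neq 0$. Applying the superinjectivity equivalence to each pair then yields $i(\tau([c]),\tau([a]))=0$ and $i(\tau([c]),\tau([b]))\neq 0$, which again contradicts $\tau([a])=\tau([b])$. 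Hence no two distinct vertices can have the same image.

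The crux of the argument, and the step I expect to be the main obstacle, is the separation fact just used: any two non-isotopic disjoint essential simple closed curves $a$ and $b$ on $N$ admit an essential curve $c$ that is disjoint from $a$ but meets $b$. To prove it I would cut $N$ along $a$, note that $b$ remains essential in the component $S$ of the cut surface containing it (otherwise $b$ would be trivial or isotopic to $a$ in $N$), and then find in $S$ an essential curve crossing $b$; since such a curve is disjoint from $a$ and essential in $N$, it is the desired $[c]$. Producing this transverse curve reduces to the standard fact that an essential curve in a surface admits an essential transverse curve once the surface is complex enough, and the hypothesis $(g,n)=(3,0)$ or $g+n\geq 4$ is precisely what prevents $S$ from degenerating (to an annulus, a M\"obius band with $b$ as core, or a small pair of pants) in which $b$ would have no essential transverse curve. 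The delicate point in the nonorientable setting is the case analysis according to whether $a$ and $b$ are one-sided or two-sided and whether they separate: cutting along a one-sided curve yields a single M\"obius-band boundary rather than two annular ones, so one must check in each configuration that the complexity hypothesis really leaves room to draw $c$ crossing $b$ yet avoiding $a$. It is exactly on the excluded small surfaces that this room disappears, which is what confines the lemma to the stated range of $(g,n)$.
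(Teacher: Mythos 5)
Your proof has the same skeleton as the paper's. The chain of equivalences --- $[a],[b]$ span an edge iff $[a]\neq[b]$ and $i([a],[b])=0$, transported through $\tau$ by superinjectivity together with injectivity --- is word for word the paper's argument, so for both of you the entire content of the lemma is the injectivity of $\tau$. The only structural difference is that the paper obtains injectivity by citing Lemma 3.2 of \cite{Ir5}, whereas you prove it inline; your two-case scheme is the standard argument that such a citation encapsulates. So the route is the same, made self-contained, and the question is only whether your inlined injectivity argument is sound.

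It has two soft spots, both specific to nonorientable surfaces. First, your Case 1 rests on the claim $i(\tau([a]),\tau([a]))=0$. With the paper's definition of $i$ (minimum over transverse representatives), this is false when $\tau([a])$ is the class of a one-sided curve: two transverse representatives of the same one-sided class are $\Z_2$-homologous with odd self-pairing, so they must cross, and $i([x],[x])=1$ for one-sided $[x]$. On an orientable surface the step is harmless, which is presumably why it looks routine, but here the contradiction evaporates exactly in the one-sided case; the repair is either to note that superinjectivity applied to the pair $([a],[a])$ preserves sidedness (settling the two-sided case) and then to route the remaining one-sided case through a third curve, or to handle all of Case 1 by a third-curve argument as in your Case 2 --- which requires the separation fact for \emph{crossing} pairs as well as disjoint ones. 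Second, in your separation fact the final step asserts that a curve $c$ essential in the cut surface $S$ and crossing $b$ essentially \emph{in $S$} is essential in $N$ with $i([c],[b])\neq 0$ \emph{in $N$}. Neither is automatic: on a nonorientable surface isotopies can pull curves across the M\"obius band cut off by a one-sided $a$ (this is exactly why every curve disjoint from a one-sided $a$ is automatically disjoint from the boundary of its M\"obius neighborhood). One must check that no bigon, compressing disk, or M\"obius band for curves of $S$ can leave $S$, which follows from the observation that no boundary component of $S$ bounds a disk in $N$; and in the smallest admissible configuration $S$ is a projective plane with two holes, where the needed crossing curve is another one-sided curve meeting $b$ once (the mod $2$ count then certifies the crossing survives in $N$) --- a specific small-surface fact tied to Scharlemann's analysis \cite{Sc}, which the paper invokes elsewhere, rather than a consequence of ``complexity'' alone. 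All of this is fixable, but it is precisely the content that the paper's citation of \cite{Ir5} is carrying.
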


\begin{proof} Let $\tau: \mathcal{C}(N) \rightarrow \mathcal{C}(N)$ be a superinjective simplicial map. By Lemma 3.2 in \cite{Ir5}, $\tau$ is injective. Let $[a]$, $[b]$ be two vertices in $\mathcal{C}(N)$. We have the following:
$[a]$ and $[b]$ are connected by an edge in $\mathcal{C}(N)$ if and only if ($i([a], [b]) = 0$ and $[a] \neq [b]$)  if and only if ($i(\tau([a]), \tau([b])) = 0$ and $\tau([a]) \neq \tau([b])$) if and only if $\tau([a])$ and $\tau([b])$ are connected by an edge in $\mathcal{C}(N)$.\end{proof}\\

Now we prove some properties of simplicial maps that satisfy the connectivity property:

\begin{lemma}
\label{inj} Let $\lambda: \mathcal{C}(N) \rightarrow \mathcal{C}(N)$ be a simplicial map that satisfies the connectivity property.
If $(g, n)= (3, 0)$ or $g + n \geq 4$, then $\lambda$ is injective.
\end{lemma}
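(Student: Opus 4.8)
The plan is to show that $\lambda$ is injective by
assuming $\lambda([a]) = \lambda([b])$ for two distinct vertices $[a] \neq [b]$
and deriving a contradiction from the connectivity property. The key
observation is that the connectivity property converts edges to edges in both
directions, so I would exploit the structure of the links of vertices in
$\mathcal{C}(N)$. Specifically, since $[a] \neq [b]$, either $[a]$ and $[b]$ are
connected by an edge (i.e.\ $i([a],[b]) = 0$) or they are not (i.e.\
$i([a],[b]) \neq 0$). I would aim to find a vertex $[c]$ that distinguishes
$[a]$ from $[b]$ with respect to the edge relation: a curve disjoint from one
but intersecting the other. If such a $[c]$ exists, then $\lambda([c])$ must be
connected by an edge to exactly one of $\lambda([a]), \lambda([b])$, but these
are equal, forcing a contradiction.

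\textbf{The main construction.} The heart of the argument is therefore a
realizability lemma for curves on $N$: \emph{given any two distinct isotopy
classes $[a], [b]$ in $\mathcal{C}(N)$, there is a vertex $[c]$ with $i([a],[c])
= 0$ and $i([b],[c]) \neq 0$ (or vice versa).} First I would fix disjoint-position
representatives and argue case by case on whether $[a]$ and $[b]$ are disjoint
or intersect. If $i([a],[b]) \neq 0$, one may simply take $[c] = [a]$ itself,
since then $i([a],[c]) = 0$ while $i([b],[c]) = i([b],[a]) \neq 0$, and this
already yields the contradiction because $\lambda([a])$ would be joined to
$\lambda([a]) = \lambda([b])$ but also $\lambda([b])$ is joined to
$\lambda([a])$ —one must be careful here since a vertex is not joined to itself,
so this case needs the nondegeneracy of $\lambda$ as a simplicial map. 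If
instead $i([a],[b]) = 0$, so $[a]$ and $[b]$ are disjoint but nonisotopic, then
cutting $N$ along $a \cup b$ and using the genus/boundary hypothesis
$(g,n) = (3,0)$ or $g+n \geq 4$ I would produce a curve $c$ that intersects $a$
essentially but is disjoint from $b$; the surface is large enough that the
complementary pieces have enough topological complexity to support such a
separating or nonseparating curve.

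\textbf{Where the topology enters.} The condition $(g,n) = (3,0)$ or
$g+n \geq 4$ is precisely the threshold guaranteeing that $\mathcal{C}(N)$ is
connected and that the complementary regions after cutting along a single pair
of disjoint curves are complex enough to carry an essential curve meeting one
component but not the other. I expect the main obstacle to be the careful
topological bookkeeping on nonorientable surfaces: one must track whether the
curves involved are $1$-sided or $2$-sided, and whether the complementary
subsurfaces are themselves nonorientable or orientable, since the existence of
the distinguishing curve $[c]$ can fail for small-complexity exceptional
surfaces (this is exactly why the hypothesis on $(g,n)$ is needed). I would
handle the low-genus listed cases either by the same uniform cutting argument
or, if necessary, by a short direct inspection of the relevant curve
configurations.

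\textbf{Conclusion.} Once the distinguishing vertex $[c]$ is produced, the
contradiction is immediate: the connectivity property forces $\lambda([c])$ to
be connected by an edge to $\lambda([a])$ if and only if $[c]$ is connected by
an edge to $[a]$, and likewise for $[b]$; but $[c]$ is joined to exactly one of
$[a], [b]$ while $\lambda([a]) = \lambda([b])$, so $\lambda([c])$ would have to
be simultaneously joined and not joined to the same vertex. This contradiction
shows $\lambda([a]) \neq \lambda([b])$ whenever $[a] \neq [b]$, establishing
injectivity.
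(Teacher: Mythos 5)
There is a genuine gap, and it sits exactly in the case that requires real work. When $i([a],[b]) \neq 0$ (so $[a]$ and $[b]$ are distinct vertices \emph{not} joined by an edge), your choice $[c] = [a]$ proves nothing: in the curve complex no vertex is joined to itself, so ``$[c]$ is joined to $[a]$'' is false from the start, and the connectivity property applied to the pair $([a],[b])$ only yields that $\lambda([a])$ and $\lambda([b])$ are not joined by an edge --- which is perfectly consistent with $\lambda([a]) = \lambda([b])$. Your appeal to ``nondegeneracy of $\lambda$ as a simplicial map'' cannot close this: nondegeneracy is not among the hypotheses (it is essentially part of what one is trying to prove), and even if granted it would only prevent the collapsing of \emph{edges}, i.e.\ of pairs of adjacent vertices, whereas here $[a]$ and $[b]$ are non-adjacent. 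What this case really needs --- and what the paper supplies --- is a genuine third curve $c$, disjoint from $a$ and not isotopic to $a$, which intersects $b$ essentially; then $[c]$ is joined to $[a]$ but not to $[b]$, so $\lambda([c])$ is joined to $\lambda([a])$ but not to $\lambda([b])$, forcing $\lambda([a]) \neq \lambda([b])$. The existence of such a $c$ is where the hypothesis $(g,n) = (3,0)$ or $g+n \geq 4$ actually enters.

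Ironically, you spend the constructive effort on the other case, where none is needed. If $i([a],[b]) = 0$ and $[a] \neq [b]$, then $[a]$ and $[b]$ are themselves joined by an edge, so edge-preservation alone gives that $\lambda([a])$ and $\lambda([b])$ are joined by an edge, hence distinct; your cutting argument along $a \cup b$ is workable but superfluous. In effect the two cases of your proposal are swapped relative to the paper: the paper dispatches the disjoint case in one line and constructs the distinguishing curve precisely in the intersecting case, which is the step your proposal leaves broken. To repair the proof, keep your concluding contradiction scheme, handle the disjoint case trivially, and in the intersecting case produce the distinguishing curve (for instance from the complement of $a$, using the complexity hypothesis) rather than reusing $a$ itself.
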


\begin{proof} Let $[a]$ and $[b]$ be two distinct vertices in $\mathcal{C}(N)$. If $[a]$ and $[b]$ are connected by an edge, then $\lambda([a])$ and
$\lambda([b])$ are connected by an edge. So, $\lambda([a]) \neq \lambda([b])$. If $[a]$ and $[b]$ are not connected by an edge, then we can find a simple closed curve $c$ such that $[a]$ and $[c]$ are connected by an edge, and $[b]$ and $[c]$ are not connected by an edge. Then we will have that $\lambda([a])$ and $\lambda([c])$ are connected by an edge, and $\lambda([b])$ and $\lambda([c])$ are not connected by an edge. This gives us that $\lambda([a]) \neq \lambda([b])$. Hence, $\lambda$ is injective.\end{proof}

\begin{lemma}
\label{sup} Let $\lambda: \mathcal{C}(N) \rightarrow \mathcal{C}(N)$ be a simplicial map that satisfies the connectivity property. Let $(g, n)= (3, 0)$ or $g + n \geq 4$. If $a$ and $b$ are two nonisotopic simple closed curves on $N$, then $i([a], [b]) = 0$ if and only if $i(\lambda([a]), \lambda([b])) = 0$.\end{lemma}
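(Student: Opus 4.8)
The plan is to prove Lemma~\ref{sup} by establishing the two directions of the biconditional separately, using the connectivity property together with the already-proven injectivity of $\lambda$ (Lemma~\ref{inj}). Throughout I fix two nonisotopic simple closed curves $a$ and $b$ on $N$, so that $[a] \neq [b]$ are distinct vertices of $\mathcal{C}(N)$.

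First I would handle the forward direction, which is the easy half and essentially immediate from the definitions. If $i([a],[b]) = 0$, then since $[a]$ and $[b]$ are distinct, they can be represented by disjoint curves, so $[a]$ and $[b]$ are connected by an edge in $\mathcal{C}(N)$. The connectivity property then forces $\lambda([a])$ and $\lambda([b])$ to be connected by an edge, and an edge in $\mathcal{C}(N)$ means exactly that the two vertices are distinct and can be realized disjointly, i.e.\ $i(\lambda([a]), \lambda([b])) = 0$.

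For the reverse direction, I would argue the contrapositive. Suppose $i([a],[b]) \neq 0$; I want to conclude $i(\lambda([a]), \lambda([b])) \neq 0$. Since $i([a],[b]) \neq 0$ the vertices $[a]$ and $[b]$ are certainly not connected by an edge (they cannot be realized disjointly), so by the connectivity property $\lambda([a])$ and $\lambda([b])$ are also not connected by an edge in $\mathcal{C}(N)$. The point that needs care here is that ``not connected by an edge'' in the complex of curves means either the two vertices have nonzero geometric intersection or they are equal. So failing to be joined by an edge does not by itself give $i(\lambda([a]), \lambda([b])) \neq 0$; it only gives that they are not both disjoint and distinct. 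This is where injectivity enters: by Lemma~\ref{inj}, under the hypothesis $(g,n) = (3,0)$ or $g+n \geq 4$, the map $\lambda$ is injective, and since $[a] \neq [b]$ we have $\lambda([a]) \neq \lambda([b])$. Thus the two images are distinct and not joined by an edge, which leaves only the possibility $i(\lambda([a]), \lambda([b])) \neq 0$, completing the reverse direction.

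The main (indeed only) obstacle is the case distinction hidden in the phrase ``connected by an edge'': because the complex of curves records both intersection and coincidence of isotopy classes, ruling out an edge is weaker than asserting nonzero intersection. Eliminating the degenerate possibility $\lambda([a]) = \lambda([b])$ is exactly what requires injectivity, and this is the reason the hypothesis $(g,n) = (3,0)$ or $g+n \geq 4$ is needed, so that Lemma~\ref{inj} applies. Once injectivity is in hand the argument is a short chain of equivalences; no intersection-number estimates or surface topology beyond the definition of an edge are required.
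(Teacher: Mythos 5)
Your proposal is correct and follows essentially the same route as the paper: both arguments combine the connectivity property with injectivity from Lemma~\ref{inj} to pass between ``geometric intersection zero'' and ``connected by an edge,'' the only cosmetic difference being that the paper writes a single chain of equivalences while you treat the two directions (one via the contrapositive) separately.
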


\begin{proof} Let $a$ and $b$ be two nonisotopic simple closed curves on $N$. Since $\lambda$ is injective by Lemma \ref{inj},
$\lambda([a]) \neq  \lambda([b])$. Then we have the following: $i([a], [b]) = 0$ if and only if $[a]$ and $[b]$ are connected
by an edge (since $[a] \neq [b]$) if and only if $\lambda([a])$ and $\lambda([b])$ are connected by an edge
if and only if $i(\lambda([a]), \lambda([b])) = 0$ (since $\lambda([a]) \neq  \lambda([b])$).\end{proof}\\

If a maximal simplex has the highest possible dimension in $\mathcal{C}(N)$, we will call
it a top dimensional maximal simplex. Since a simplicial map that satisfies the connectivity property is injective,
it sends top dimensional maximal simplices to top dimensional maximal simplices. By following the proofs of
Lemma 3.3 and Lemma 3.4 given by the author in \cite{Ir5}, we obtain the following results which show that
simplicial maps that satisfy the connectivity property preserve the adjacency and nonadjacency relation w.r.t.
top dimensional maximal simplices in $\mathcal{C}(N)$.

\begin{lemma}
\label{adjacent} Suppose that $(g, n)= (3, 0)$ or $g + n \geq 4$.
Let $\lambda: \mathcal{C}(N) \rightarrow \mathcal{C}(N)$ be a simplicial map that satisfies the connectivity property.
Let $P$ be a pair of pants decomposition on $N$ which corresponds to a top dimensional maximal
simplex in $\mathcal{C}(N)$. Let $a, b \in P$ such that $a$ is adjacent to $b$ w.r.t.
$P$. There exists $a'  \in \lambda([a])$ and $b'  \in \lambda([b])$ such that $a'$ is adjacent
to $b'$ w.r.t. $P'$ where $P'$ is a set of pairwise disjoint curves representing $\lambda([P])$ containing $a', b'$.
\end{lemma}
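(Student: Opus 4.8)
The plan is to reduce the geometric statement—that $a$ and $b$ cobound a pair of pants—to a purely combinatorial condition phrased entirely in terms of the disjointness (intersection-zero) relation, since that is precisely the data $\lambda$ is guaranteed to respect. By Lemma \ref{inj} the map $\lambda$ is injective, and by Lemma \ref{sup} it preserves both $i = 0$ and $i \neq 0$ for every pair of distinct vertices; in particular $\lambda$ carries the top dimensional maximal simplex $[P]$ to another maximal simplex of the same (top) dimension, which I realize by a set of pairwise disjoint curves $P'$. Write $a_0' \in \lambda([a])$ and $b_0' \in \lambda([b])$ for the members of $P'$ representing $\lambda([a])$ and $\lambda([b])$. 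The goal then becomes to show that $a_0'$ and $b_0'$ can be isotoped to curves $a', b'$ that cobound a pair of pants of $P'$.

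First I would establish a combinatorial certificate for adjacency. When $a$ and $b$ cobound a pair of pants $Q$ of $P$, one constructs a simple closed curve $c$ that meets $a$ and $b$ essentially but is disjoint from every other element of $P$; the isotopy class $[c]$ is therefore joined by an edge to each vertex of $[P \setminus \{a,b\}]$ while $i([c],[a]) \neq 0$ and $i([c],[b]) \neq 0$. Conversely, I would argue that if $a$ and $b$ are \emph{not} adjacent with respect to $P$, then no vertex can realize this pattern: any curve disjoint from all of $P \setminus \{a,b\}$ lives in the subsurface cut out by those curves, and in the nonadjacent configuration that subsurface does not allow a curve meeting both $a$ and $b$ without also meeting a third element of $P$. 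This equivalence is exactly the content of Lemma 3.3 of \cite{Ir5}, and the only input it uses about the map is the two-way preservation of intersection-zero supplied here by Lemma \ref{sup}.

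With the certificate in hand, the transfer is immediate: applying $\lambda$ to the configuration $\{c\} \cup (P \setminus \{a,b\})$ and using Lemma \ref{sup} produces a vertex $\lambda([c])$ joined by an edge to every vertex of $\lambda([P \setminus \{a,b\}])$ while meeting $\lambda([a])$ and $\lambda([b])$ essentially. Thus $a_0'$ and $b_0'$ satisfy, relative to $P'$, the same combinatorial adjacency criterion that $a$ and $b$ satisfy relative to $P$. Invoking the converse direction of the certificate—now read inside $P'$—together with the classification of the subsurfaces cut out by a top dimensional pants decomposition (Lemma \ref{dim}), I conclude that $a_0'$ and $b_0'$ are adjacent with respect to $P'$; isotoping them to disjoint representatives $a', b'$ that bound a common pants of $P'$ completes the argument. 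This is the step carried out as Lemma 3.4 of \cite{Ir5}.

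The main obstacle is the case analysis hidden inside the certificate of the second step. On a nonorientable surface the pairs of pants appearing in a top dimensional decomposition occur in several topological types—some with a one-sided curve on the boundary, some with a boundary component of $N$, some cobounded by two further curves of $P$—and the test curve $c$, together with the verification that nonadjacent pairs fail the criterion, must be treated separately in each local model. Getting both the necessity and the sufficiency of the criterion uniformly across these configurations, so that the combinatorial pattern detects adjacency and nothing else, is where the real work lies, and it is precisely why the argument must follow the detailed constructions of Lemmas 3.3 and 3.4 in \cite{Ir5}.
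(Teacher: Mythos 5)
Your proposal is correct and takes essentially the same route as the paper: the paper's entire proof is the observation that injectivity (Lemma \ref{inj}) and two-way preservation of disjointness (Lemma \ref{sup}) are the only properties of superinjective maps used in Lemmas 3.3 and 3.4 of \cite{Ir5}, so those proofs carry over verbatim to maps satisfying the connectivity property. Your dual test-curve certificate is precisely the mechanism of those cited proofs, and your reduction of the problem to Lemmas \ref{inj} and \ref{sup} is exactly the paper's reduction.
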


\begin{lemma}
\label{nonadjacent} Suppose that $g + n \geq 4$. Let $\lambda: \mathcal{C}(N) \rightarrow \mathcal{C}(N)$
be a simplicial map that satisfies the connectivity property. Let $P$ be a pair of pants decomposition on $N$ which
corresponds to a top dimensional maximal simplex in $\mathcal{C}(N)$. Let $a, b \in P$ such that $a$ is not adjacent
to $b$ w.r.t. $P$. There exists $a'  \in \lambda([a])$ and $b'  \in \lambda([b])$ such that $a'$ is not adjacent
to $b'$ w.r.t. $P'$ where $P'$ is a set of pairwise disjoint curves representing $\lambda([P])$ containing $a', b'$.
\end{lemma}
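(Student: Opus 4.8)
The plan is to mirror the structure of Lemma \ref{adjacent}, which has just been proven for the adjacency case, and adapt the argument from Lemma 3.4 in \cite{Ir5} to the nonadjacency relation. First I would recall the key facts already established: $\lambda$ is injective (Lemma \ref{inj}), it preserves disjointness in both directions (Lemma \ref{sup}), and because it satisfies the connectivity property it carries top dimensional maximal simplices to top dimensional maximal simplices. Thus, starting from the pants decomposition $P$ representing a top dimensional maximal simplex, the image $\lambda([P])$ is again (the isotopy class set of) a top dimensional pants decomposition, and I may fix a system $P'$ of pairwise disjoint representatives. The goal is to show that if $a, b \in P$ are \emph{not} adjacent w.r.t. $P$, then suitable representatives $a' \in \lambda([a])$ and $b' \in \lambda([b])$ in $P'$ are not adjacent w.r.t. $P'$.

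The core of the argument is a combinatorial characterization of adjacency that is expressible purely in terms of the edge relation (equivalently, disjointness), so that it is automatically preserved by $\lambda$. Two curves $a,b$ in a pants decomposition $P$ are adjacent exactly when they cobound a pair of pants, and this can be detected by examining which other curves of $P$ they share a pair of pants with. The plan is to find a vertex, or a small configuration of vertices, whose disjointness pattern relative to $a$ and $b$ distinguishes the adjacent case from the nonadjacent case. Concretely, I would look for a curve $c$ disjoint from all of $P$ except for meeting exactly the curves that witness adjacency, so that the existence of such a $c$ with a prescribed intersection pattern is equivalent to $a$ being adjacent to $b$. Applying $\lambda$ and using that intersection zero is preserved in both directions (Lemma \ref{sup}) then transports this characterization to $P'$, forcing $a'$ and $b'$ to be nonadjacent w.r.t. $P'$.

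The main obstacle, and the reason the hypothesis here is $g + n \geq 4$ rather than the slightly wider range allowed in Lemma \ref{adjacent}, is that on nonorientable surfaces maximal simplices come in several dimensions (Lemma \ref{dim}), and the local combinatorial model of a top dimensional pants decomposition is not as rigid as in the orientable case. One must ensure that the auxiliary curves used to certify adjacency or nonadjacency actually exist on $N$ and that one-sided curves do not create exceptional configurations; the topological case analysis needed to produce these witnesses is where the real work lies, and it is precisely this analysis that breaks down for the smallest surfaces excluded by the hypothesis. I would handle the possible presence of one-sided curves in $P$ by treating the pieces of $P'$ glued along such curves carefully, checking that the witness construction still goes through.

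Since the adjacency statement of Lemma \ref{adjacent} has already been obtained by following the proof of Lemma 3.3 in \cite{Ir5}, I would complete the nonadjacency statement by the parallel adaptation of Lemma 3.4 in \cite{Ir5}, substituting the connectivity property and Lemma \ref{sup} wherever superinjectivity was invoked in the original argument. The upshot is that $\lambda$ preserves both the adjacency and the nonadjacency relations with respect to top dimensional maximal simplices, which is exactly what is needed to reconstruct, in the subsequent sections, a homeomorphism inducing $\lambda$.
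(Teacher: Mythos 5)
Your proposal matches the paper's own treatment: the paper proves Lemma \ref{nonadjacent} precisely by following the proof of Lemma 3.4 in \cite{Ir5}, replacing superinjectivity with the connectivity property via Lemma \ref{inj} and Lemma \ref{sup}, and using that top dimensional maximal simplices are sent to top dimensional maximal simplices. This is exactly the substitution argument you describe, so your approach is the same as the paper's.
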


\begin{figure}
\begin{center}
\epsfxsize=1.8in \epsfbox{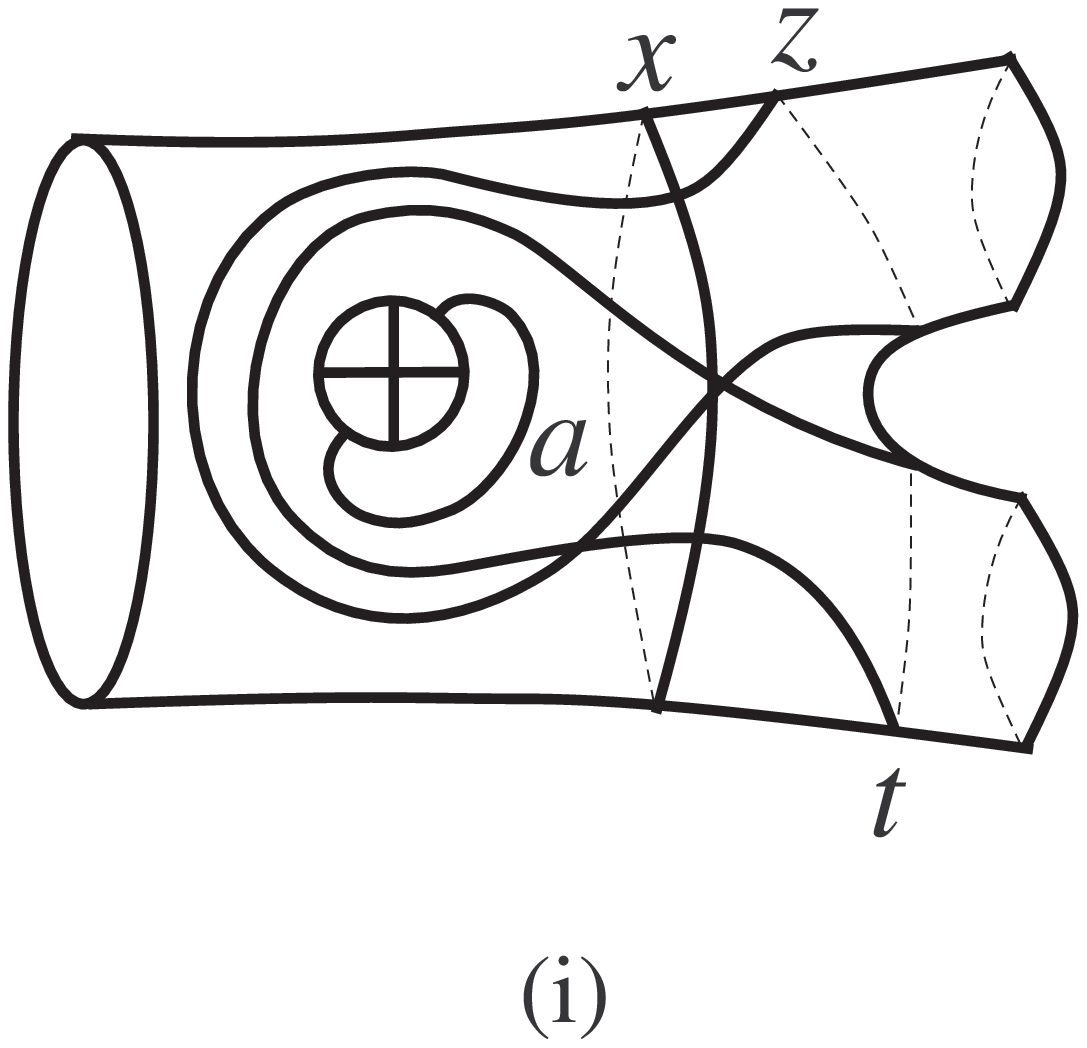} \hspace{0.5in}
\epsfxsize=1.4in \epsfbox{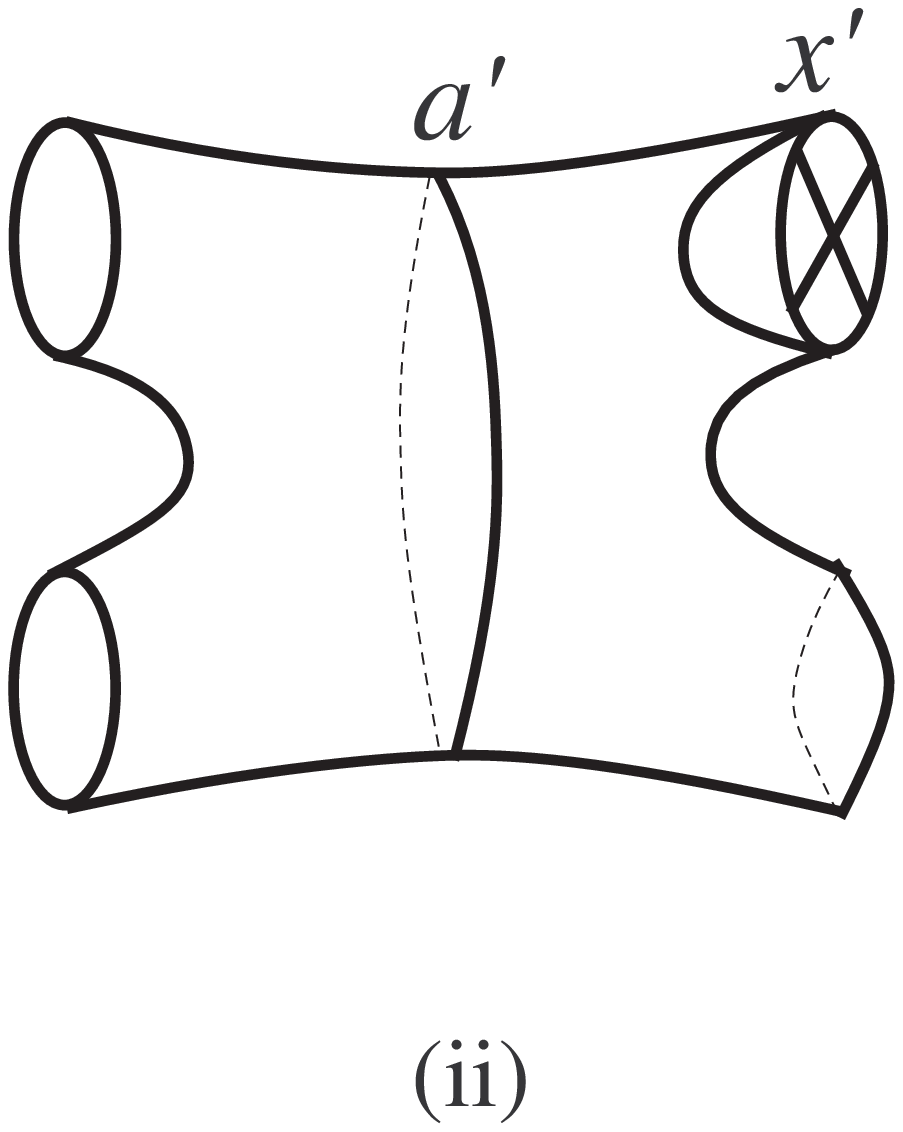}
\caption{Curve configuration I}
\label{Fig1}
\end{center}
\end{figure}

\begin{lemma}
\label{1-sided-co} Suppose that $g=1$ and $n \geq 3$. Let $\lambda: \mathcal{C}(N) \rightarrow \mathcal{C}(N)$ be a simplicial map that satisfies the connectivity property. If $a$ is a 1-sided simple closed curve on $N$, then $\lambda([a])$ is the isotopy class of a 1-sided simple
closed curve on $N$.
\end{lemma}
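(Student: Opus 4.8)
The plan is to exploit the very special structure of pair of pants decompositions when $g=1$. Writing $N$ as $\mathbb{RP}^2$ with $n\ge 3$ open disks removed, I would first record the topological facts that drive everything. Cutting $N$ along a $1$-sided curve $a$ destroys the single crosscap and, by an Euler characteristic count, yields the orientable planar surface $S_{0,n+1}$; cutting along a $2$-sided curve cannot remove the crosscap, so every $2$-sided curve is separating, bounding an orientable planar piece that contains $k$ boundary components with $2\le k\le n-1$ together with a nonorientable piece $N_{1,\,n-k+1}$. Two consequences I would extract are: (i) every pair of pants decomposition of $N$ contains \emph{exactly one} $1$-sided curve (the planar pieces contribute only $2$-sided curves, and a second $1$-sided curve is impossible once the surface is already orientable), so every maximal simplex is top dimensional of dimension $n-2$; and (ii) any two distinct $1$-sided curves intersect (a $1$-sided curve disjoint from $a$ would live in the orientable $S_{0,n+1}$ and hence be $2$-sided), while every $2$-sided curve is disjoint from some $1$-sided curve (the nonorientable complementary piece $N_{1,\,n-k+1}$ with $n-k+1\ge 2$ carries one).

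Next I would translate these into statements about $\lambda$. By Lemma~\ref{inj} the map $\lambda$ is injective, and by Lemma~\ref{sup} it preserves the relations $i=0$ and $i\ne 0$ in both directions; since all maximal simplices are top dimensional, Lemmas~\ref{adjacent} and~\ref{nonadjacent} apply to \emph{every} pair of pants decomposition, so $\lambda$ carries each pants decomposition $P$ to a pants decomposition $\lambda(P)$ and induces an isomorphism of their adjacency graphs. The goal then becomes purely local: to show that the unique $1$-sided curve of each $P$ is sent to the unique $1$-sided curve of $\lambda(P)$. Equivalently, I want a description of $1$-sidedness in terms of the relations that $\lambda$ is known to preserve.

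The conceptual tool I would use is the link dichotomy coming from fact (ii). The link of a $1$-sided curve $a$ is $\mathcal{C}(S_{0,n+1})$, the curve complex of a \emph{connected} orientable surface, whose intersection graph is connected for $n\ge 3$, so this link is join-irreducible. By contrast, when a $2$-sided curve $b$ surrounds $k\ge 3$ boundary components both complementary pieces carry curves, so $\mathrm{Lk}(b)=\mathcal{C}(S_{0,k+1})\ast\mathcal{C}(N_{1,\,n-k+1})$ is a nontrivial join. This separates $1$-sided curves from all $2$-sided curves except those surrounding exactly two boundary components, whose link is $\mathcal{C}(N_{1,\,n-1})$ and is again join-irreducible. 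For this remaining family I would pass to the dual-tree picture of a pants decomposition of the planar surface $S_{0,n+1}$: the $1$-sided curve appears as an interior curve-leaf attached to a single pair of pants, whereas a $2$-sided curve is an internal edge meeting two pairs of pants. Choosing pants decompositions that realize the configuration of Figure~\ref{Fig1}, I would arrange the adjacency and non-adjacency pattern so that, under the graph isomorphism induced by $\lambda$, the image of $a$ is forced into the $1$-sided slot, from which $\lambda([a])$ is seen to be $1$-sided.

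The hard part will be exactly this last separation, between a $1$-sided curve and a $2$-sided curve surrounding two boundary components: their links share the same dimension of maximal simplices and the same connectivity, so no coarse invariant distinguishes them, and the clean statement ``$[c]$ is $1$-sided iff no $1$-sided curve is disjoint from it'' is circular. Moreover, since $\lambda$ is not yet known to be surjective, a negative or universal combinatorial characterization need not transfer forward through $\lambda$. The resolution is to encode the distinction by a \emph{finite} configuration of curves (Figure~\ref{Fig1}) whose mutual disjointness and intersection pattern can be realized around a $1$-sided curve but not around such a $2$-sided curve; because $\lambda$ preserves this pattern by Lemma~\ref{sup}, the realization transfers and pins down the type of $\lambda([a])$. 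Verifying that this configuration is realizable in precisely the $1$-sided case, and impossible in the two-boundary $2$-sided case, is the delicate step on which the whole lemma rests.
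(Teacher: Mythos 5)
Your proposal correctly assembles the ambient facts (every pants decomposition of $N_{1,n}$ contains exactly one $1$-sided curve, so all maximal simplices are top dimensional; every $2$-sided curve is separating with a planar side holding $k$ boundary components, $2\le k\le n-1$), and it correctly identifies where the difficulty lies. But the lemma is not proved, because the step you yourself call ``the delicate step on which the whole lemma rests'' is never carried out: you never exhibit the finite configuration that can be realized around a $1$-sided curve but not around a separating image curve, and you never prove the impossibility half of that claim. That impossibility is the entire content of the paper's proof, and it rests on an external input your sketch lacks: Scharlemann's theorem that in a projective plane with two boundary components, $N_{1,2}$, there do not exist two nontrivial, nonisotopic simple closed curves both meeting the $1$-sided core essentially. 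Concretely, the paper takes the pants neighbor $x$ of $a$ and two curves $z,t$ that are disjoint from $a$ (resp.\ from $x_{n-3}$ when $n\ge 4$) and cross $x$ (resp.\ $x_{n-2}$) essentially; if $\lambda([a])$ were represented by a separating curve $a'$, then adjacency/nonadjacency preservation forces the image pants decomposition to place the $1$-sided curve $x'$ inside an $N_{1,2}$ with $a'$ on its boundary, and by Lemma~\ref{inj} and Lemma~\ref{sup} the images $z',t'$ are nonisotopic curves inside that $N_{1,2}$ crossing $x'$ essentially --- contradicting Scharlemann. Without this (or an equivalent) finiteness statement about $\mathcal{C}(N_{1,2})$, your ``realizable in precisely the $1$-sided case'' assertion is an unproven claim, not a proof.

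There are also two defects in the join-irreducibility route you propose for the curves with $k\ge 3$. First, the logic is inverted: connectivity of a complex does not imply join-irreducibility --- every nontrivial join with both factors nonempty is connected --- so ``connected, hence join-irreducible'' proves nothing (join-irreducibility is equivalent to connectivity of the \emph{complement} graph); moreover the connectivity claim itself fails at $n=3$, where the link of a $1$-sided curve sits in $\mathcal{C}(S_{0,4})$, an infinite discrete set. Second, as you concede, a property of the link of $[a]$ stated with universal quantifiers cannot be pushed forward through $\lambda$, since $\lambda$ is only an edge-and-nonedge-preserving injection, not a surjection: the link of $[a]$ maps \emph{into}, not onto, the link of $\lambda([a])$, so join-irreducibility of the source link is compatible with the target link being a nontrivial join. (One can salvage something --- connectivity of the complement graph of the source link forces its image into a single join factor, and counting pants then forces $k=2$ --- but you do not make this argument, and even it only reduces matters to the $k=2$ case you left open.) So both branches of your plan terminate in the same unproven statement, which is exactly what the paper's Figure~\ref{Fig1}/Figure~\ref{Fig2-new} construction plus Scharlemann's result supply.
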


\begin{proof} Suppose that $g=1$ and $n \geq 3$. Let $a$ be a 1-sided simple closed curve on $N$. Since genus is 1, the complement of
$a$ is orientable. Let $a' \in \lambda([a])$.

Suppose $a'$ is a 2-sided separating simple closed curve. If $n= 3$, then we complete $a$ to a pair of pants decomposition $P=\{a, x\}$ as shown in Figure \ref{Fig1} (i). Let $P'$ be a set of pairwise disjoint
elements of $\lambda([P])$ containing $a'$, and let $x' \in \lambda([x]) \cap P'$. Since $a'$ is separating, $x'$ has to be a 1-sided curve as shown in Figure \ref{Fig1} (ii).
Let $z$ and $t$ be as shown in \ref{Fig1} (i). We see that $a, x, z, t$ are pairwise nonisotopic, each of $z$ and $t$
intersects $x$ essentially (i.e. have nonzero geometric intersection), and each of them is disjoint from $a$. Let $z' \in \lambda([z]), t' \in \lambda([t])$ such that $z', t', a', x'$ intersect minimally. Since $\lambda$ is injective by Lemma \ref{inj}, $a', x', z', t'$ are pairwise nonisotopic.
By Lemma \ref{sup}, we see that each of $z', t'$ is disjoint from $a'$, and intersects $x'$ essentially. So, $z', t'$ both have to lie in the subsurface, $N_1$, a projective plane with two boundary components which contains $x'$ and has $a'$ as a boundary component as shown in
Figure \ref{Fig1} (ii). This gives a contradiction, since in $N_1$ there are no two nontrivial, nonisotopic simple closed curves which intersect 
$x'$ essentially, see Scharlemann's result in \cite{Sc}.

\begin{figure}
\begin{center}
\epsfxsize=3.7in \epsfbox{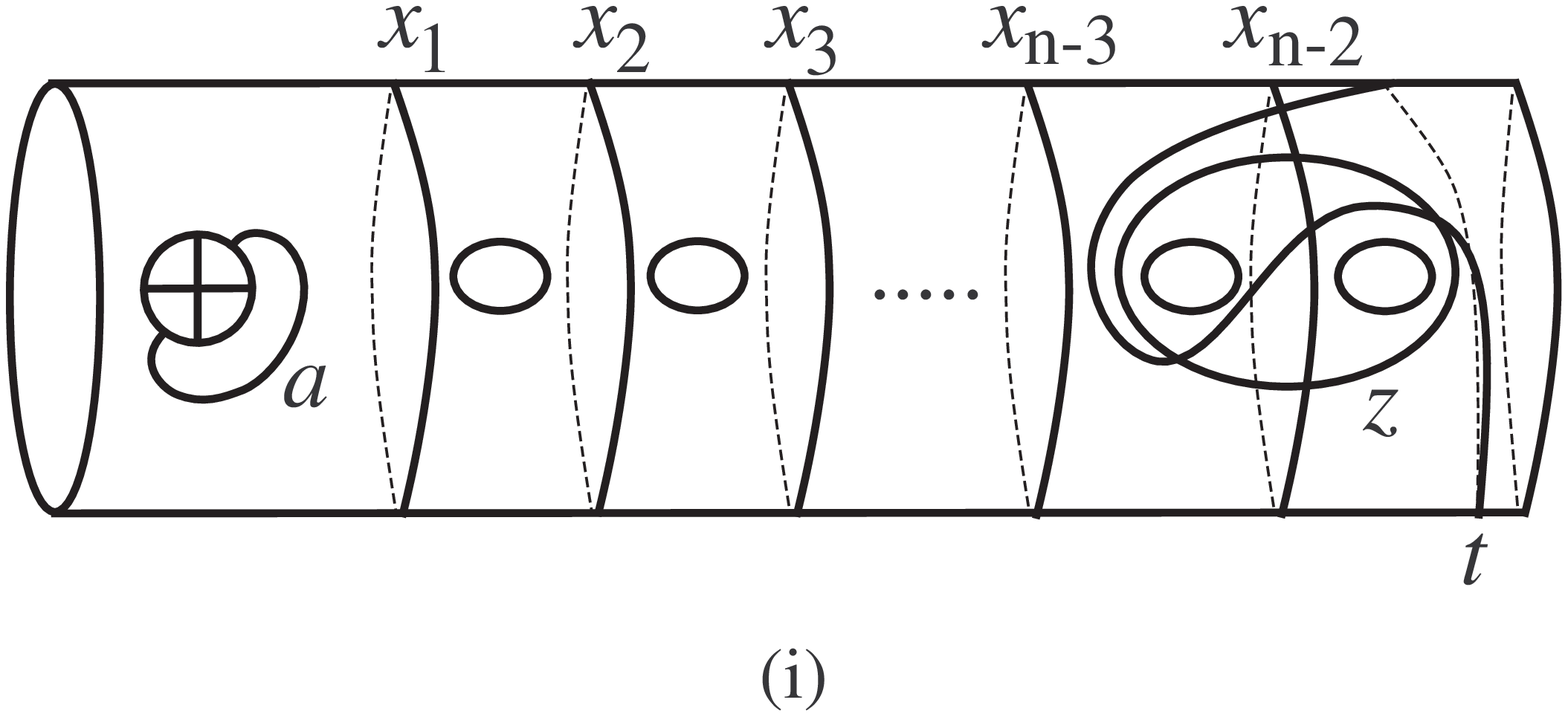}  \vspace{0.1in}
\epsfxsize=3.7in \epsfbox{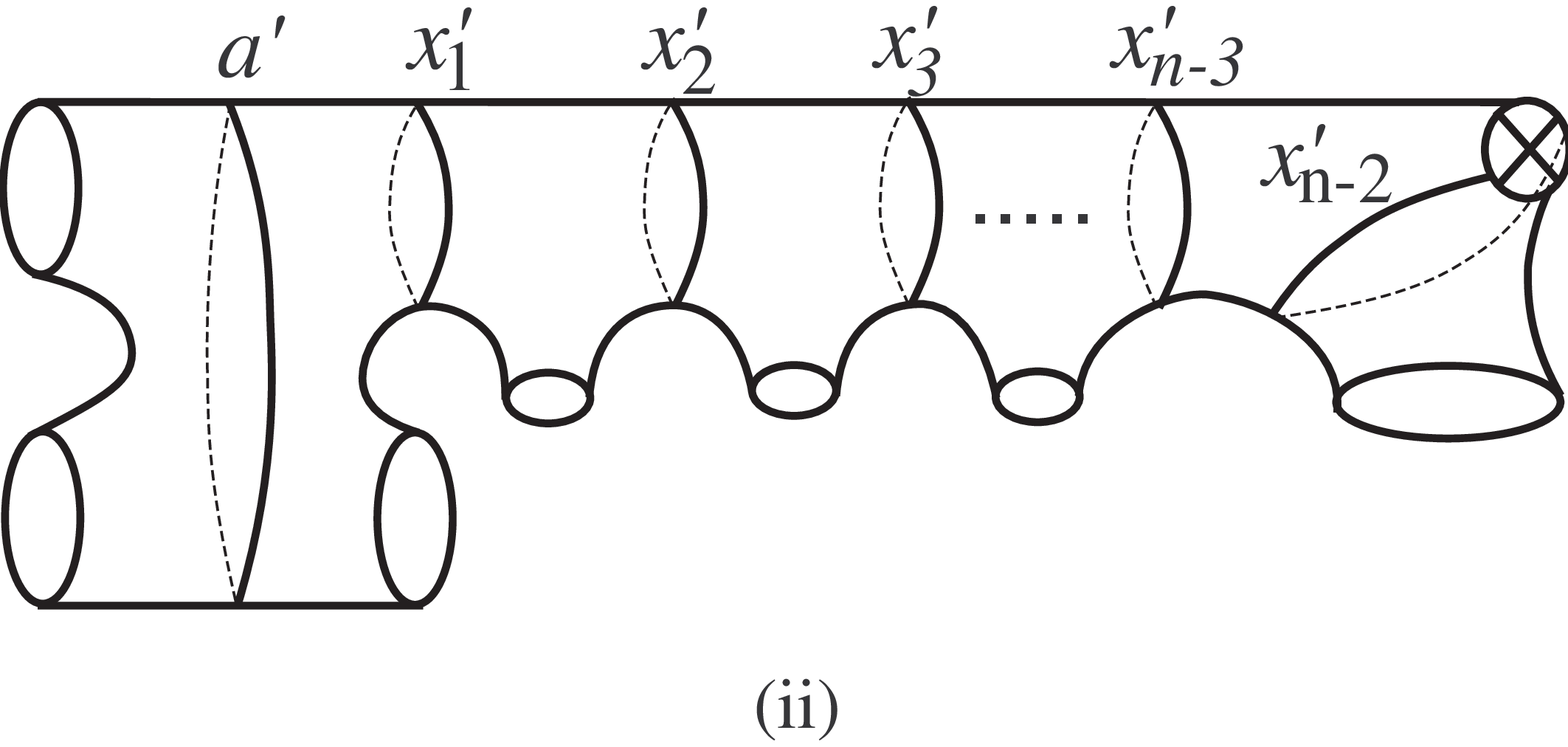}
\caption{Curve configuration II }
\label{Fig2-new}
\end{center}
\end{figure}

If $n \geq 4$, then we complete $a$ to a pair of pants decomposition $P= \{a, x_1, \cdots, x_{n-2}\}$ as shown in
Figure \ref{Fig2-new} (i). $P$ corresponds to a top dimensional maximal simplex. Let $P'$ be a set of pairwise disjoint
curves representing $\lambda([P])$ containing $a'$. Let $x_i' \in \lambda([x_i]) \cap P'$ for each $i= 1, 2, \cdots, n-2$.
By Lemma \ref{adjacent} and Lemma \ref{nonadjacent}, $\lambda$ preserves adjacency and nonadjacency w.r.t. top dimensional
maximal simplices. By using that adjacency and nonadjacency are preserved, $n \geq 4$ and $g=1$, we see that the elements
of $P'$ should be as shown in Figure \ref{Fig2-new} (ii). Consider the curves $z, t$ given in Figure \ref{Fig2-new} (i).
We see that $[x_{n-3}], [x_{n-2}], [z], [t]$ are all distinct
elements and each of $z$ and $t$ intersects $x_{n-2}$ essentially, and each of them is disjoint from $x_{n-3}$. Let
$z' \in \lambda([z]), t' \in \lambda([t])$ such that each of $z'$ and $t'$ intersects each of $x'_{n-3}, x'_{n-2}$ minimally.
Since $\lambda$ is injective by Lemma \ref{inj}, $[x'_{n-3}], [x'_{n-2}], [z'], [t']$ are all distinct elements. By
Lemma \ref{sup}, we see that each of $z', t'$ is disjoint from $x'_{n-3}$, and they intersect $x'_{n-2}$ essentially. So,
$z'$ and $t'$ both have to lie in the subsurface, $N_1$, which is a projective plane with two boundary components containing
$x'_{n-2}$, and having $x'_{n-3}$ as a boundary component as shown in Figure \ref{Fig2-new} (ii). This gives a contradiction as
before. So, $a'$ cannot be a 2-sided separating simple closed curve. Since $g=1$, $a'$ cannot be a 2-sided nonseparating
simple closed curve. Hence, $a'$ is a 1-sided simple closed curve on $N$.\end{proof}

\begin{lemma}
\label{1-sided-cn-10} Suppose that $g=2$ and $n \geq 2$. Let $\lambda: \mathcal{C}(N) \rightarrow \mathcal{C}(N)$
be a simplicial map that satisfies the connectivity property. If $a$ is a 1-sided simple closed curve on $N$ whose
complement is nonorientable, then $\lambda(a)$ is not the isotopy class of a separating simple closed curve on $N$.
\end{lemma}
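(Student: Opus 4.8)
The plan is to argue by contradiction, following the template of the proof of Lemma~\ref{1-sided-co}. Suppose that $\lambda([a])$ is the isotopy class of a separating simple closed curve, and fix a separating representative $a'\in\lambda([a])$; note that any separating curve is automatically $2$-sided, since cutting along a $1$-sided curve never disconnects a surface. Because the complement of $a$ is nonorientable, it contains a $1$-sided simple closed curve disjoint from $a$, so $a$ can be completed to a pair of pants decomposition $P$ corresponding to a top dimensional maximal simplex of $\mathcal{C}(N)$. By Lemma~\ref{dim} (with $g=2$, so $r=1$, $a_r=n-1$, $b_r=n$) the top dimension is $n$, and an Euler characteristic count shows such a $P$ consists of exactly $n-1$ two-sided curves together with two $1$-sided curves, one of which is $a$. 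Since $\lambda$ is injective (Lemma~\ref{inj}), it carries top dimensional maximal simplices to top dimensional maximal simplices, so there is a set $P'$ of pairwise disjoint curves representing $\lambda([P])$, containing $a'$ and again realizing a top dimensional maximal simplex. Throughout I would use Lemma~\ref{sup} to transport disjointness and essential intersection, and Lemmas~\ref{adjacent} and~\ref{nonadjacent} to transport the adjacency and nonadjacency relations of $P$ to $P'$.

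The heart of the argument is to show that assuming $a'$ separating forces one complementary component $N_1$ of $a'$ to be a projective plane with two boundary components (one of which is $a'$) carrying a distinguished $1$-sided curve $x'\in P'$, exactly as in Figures~\ref{Fig1} and~\ref{Fig2-new}. The mechanism is that $P'$ is top dimensional, hence has precisely two $1$-sided curves by Lemma~\ref{dim}, and under the assumption neither of these is $a'$; thus both crosscaps of $N$ lie in $N\setminus a'$, and the preserved (non)adjacencies should force the two crosscaps onto \emph{opposite} sides of $a'$, so that one side is a projective plane with two boundary components containing a $1$-sided $x'$. Concretely I would pick a curve $x\in P$ adjacent to $a$ whose image $x'$ plays this role, and use the adjacency bookkeeping to pin down how the remaining $x_i'$ distribute on the two sides of $a'$. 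I expect to treat the base case $n=2$ by hand with its own configuration and the cases $n\geq 3$ uniformly, since the smallest case has too few curves to run the generic argument, paralleling the $n=3$ versus $n\geq 4$ split in the proof of Lemma~\ref{1-sided-co}.

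With the trap in place, the contradiction is obtained as before. In $N$ I would exhibit two simple closed curves $z$ and $t$ that are disjoint from a suitable curve of $P$, each intersecting $x$ essentially, with $[a],[x],[z],[t]$ (and the relevant $[x_i]$) pairwise distinct. Choosing representatives $z'\in\lambda([z])$ and $t'\in\lambda([t])$ meeting the elements of $P'$ minimally, Lemma~\ref{sup} shows that $z'$ and $t'$ are disjoint from $a'$ and intersect $x'$ essentially, while Lemma~\ref{inj} gives $[z']\neq[t']$. Since $z'$ and $t'$ are disjoint from $a'$ yet must cross $x'\subset N_1$, they are confined to $N_1$ up to isotopy. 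Hence $z'$ and $t'$ are two nonisotopic nontrivial curves in the projective plane $N_1$ with two boundary components, each intersecting $x'$ essentially, which is impossible by Scharlemann's result in~\cite{Sc}. This contradiction shows that $\lambda([a])$ cannot be the class of a separating curve.

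The main obstacle is the middle step: verifying that $a'$ separating genuinely forces a projective-plane-with-two-boundary-components subsurface bounded in part by $a'$ and carrying a $1$-sided $x'$. Unlike the $g=1$ situation of Lemma~\ref{1-sided-co}, here a top dimensional simplex contains \emph{two} $1$-sided curves, so a priori the second crosscap could be distributed to either side of $a'$; the adjacency and nonadjacency data must therefore be analyzed carefully (and drawn) to exclude the configurations in which no such trap appears, and the $n=2$ case in particular must be checked directly.
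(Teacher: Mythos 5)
Your plan for $n \geq 3$ is essentially the paper's own proof: complete $a$ to a top dimensional pair of pants decomposition (the paper uses $P=\{a,b,x_1,\dots,x_{n-1}\}$ of Figure \ref{Fig3} (ii), in which $a$ is adjacent to only $b$ and $x_1$ w.r.t. $P$), transport adjacency and nonadjacency by Lemmas \ref{adjacent} and \ref{nonadjacent} to pin down the image configuration $P'$ (Figure \ref{Fig3} (iv)), then trap the images of two domain curves $z,t$ in a projective plane with two boundary components and contradict Scharlemann's result \cite{Sc}, exactly as in Lemma \ref{1-sided-co}. Your count via Lemma \ref{dim} (top dimension $n$, hence exactly two $1$-sided and $n-1$ two-sided curves in a top dimensional decomposition) is correct and is what drives that analysis. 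That you leave the middle step at the level of ``adjacency bookkeeping plus a drawing'' is comparable to the paper, which appeals to its figures; but note that your guess that the two crosscaps are forced to \emph{opposite} sides of $a'$ is not the only a priori configuration --- configurations with both crosscaps on one side of $a'$ must be excluded or shown to yield a trap too, and the trap must be one for which the domain curves $z,t$ with the required intersection pattern actually exist.

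The genuine gap is the case $(g,n)=(2,2)$. You defer it (``treat by hand,'' ``must be checked directly'') without giving any argument, and, as you yourself observe, your generic mechanism cannot run there; so nothing in your proposal proves this case. The paper closes it with a different, purely counting argument, which you would need to supply: choose the configuration $a,x$ of Figure \ref{Fig3} (i), so that $a$ is adjacent to only $x$ w.r.t. a top dimensional $P$; in the image, $a'$ is separating and adjacent to only $x'$ w.r.t. $P'$, so the two pairs of pants of $P'$ meeting $a'$ (one on each side, since $a'$ separates) glue along $a'$ into a four-holed sphere whose remaining boundary circles can only be $x'$ and boundary components of $N$; this forces $x'$ plus three boundary components of $N$ to exist, which fails when $n=2$. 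Equivalently and more simply: when $n=2$ a top dimensional $P'=\{a',x',b'\}$ cuts $N$ into just two pairs of pants; if $a'$ separates, both pairs of pants have $a'$ on their boundary, so $b'$ is forced to be adjacent to $a'$ w.r.t. $P'$, contradicting the nonadjacency of $a$ and $b$ w.r.t. $P$, which is preserved by Lemma \ref{nonadjacent}. Without this (or an equivalent) argument, your proof is incomplete.
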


\begin{proof} Let $a$ be a 1-sided simple closed curve on $N$ whose complement is nonorientable.
Let $a' \in \lambda([a])$. Suppose $a'$ is a separating simple closed curve on $N$.

\begin{figure}
\begin{center}
\epsfxsize=1.7in \epsfbox{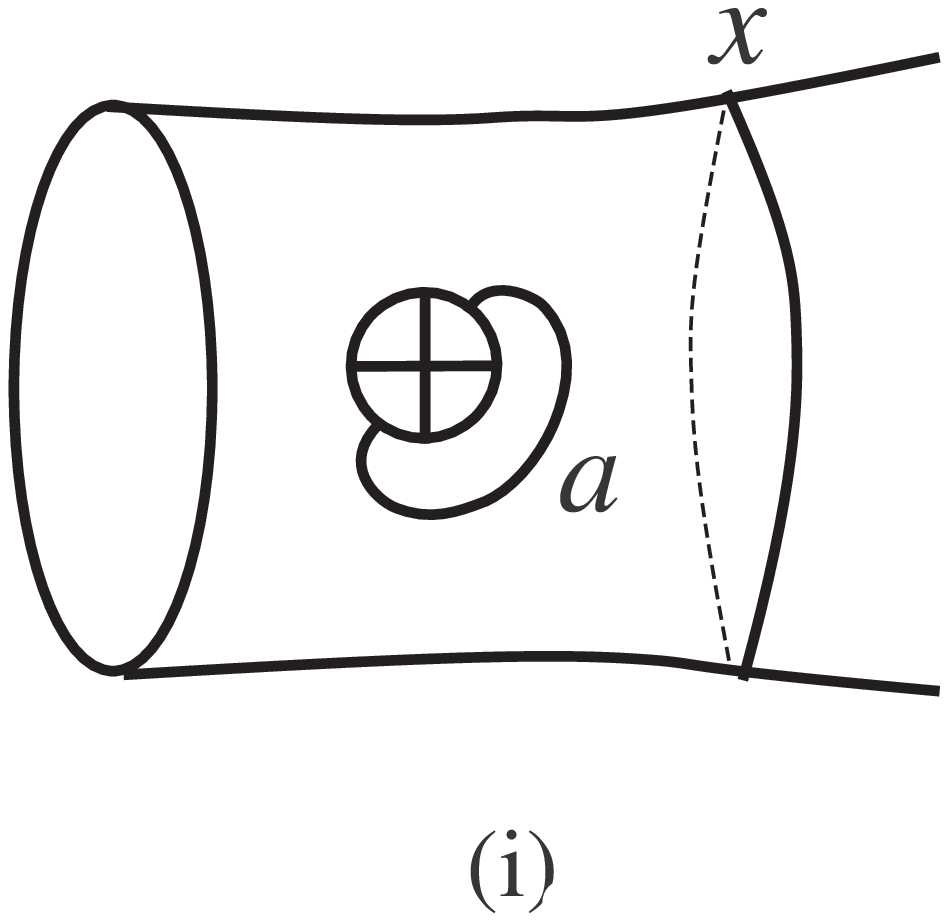} \hspace{0.25in} \epsfxsize=3.5in \epsfbox{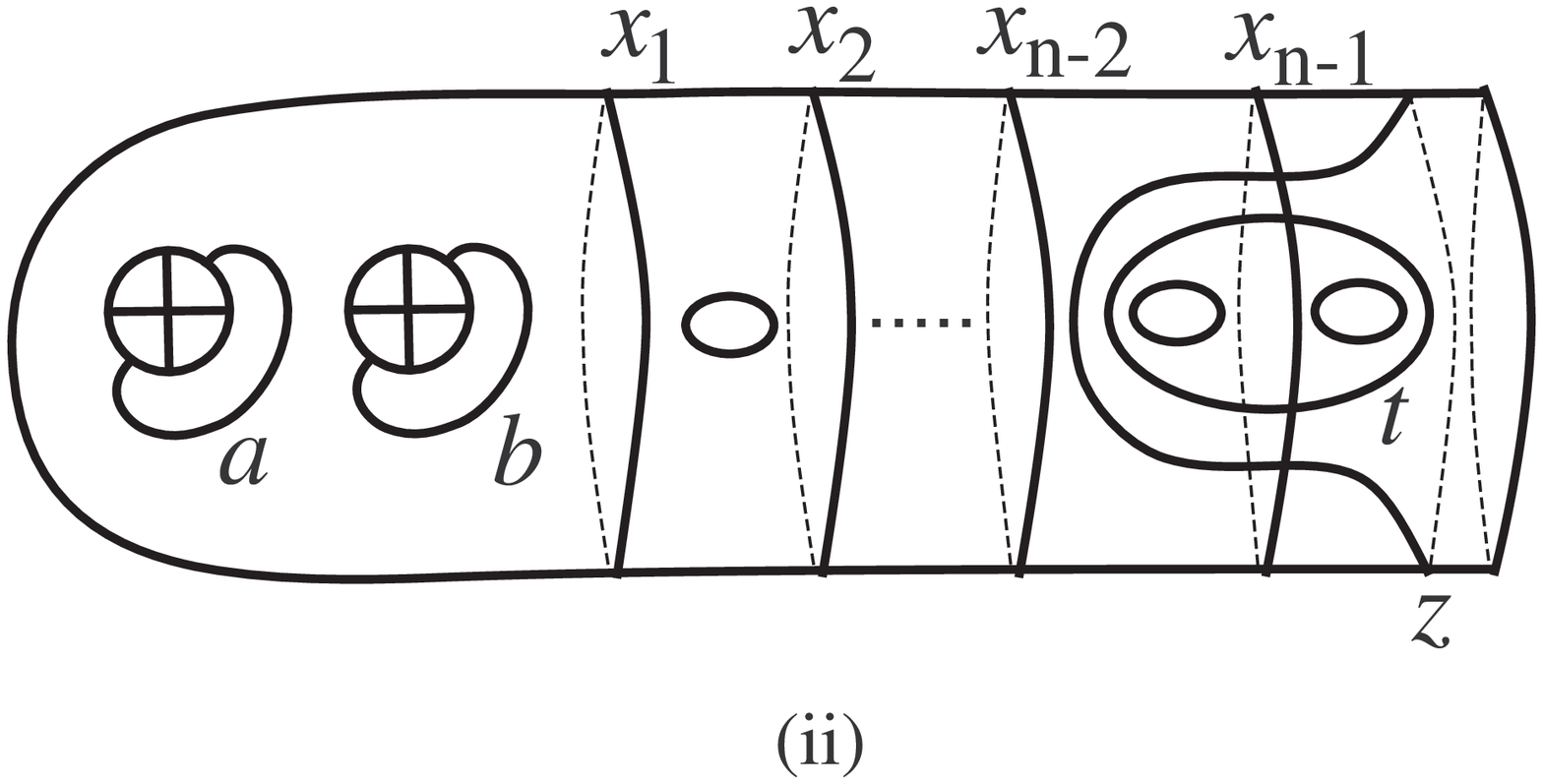}

\epsfxsize=1.8in \epsfbox{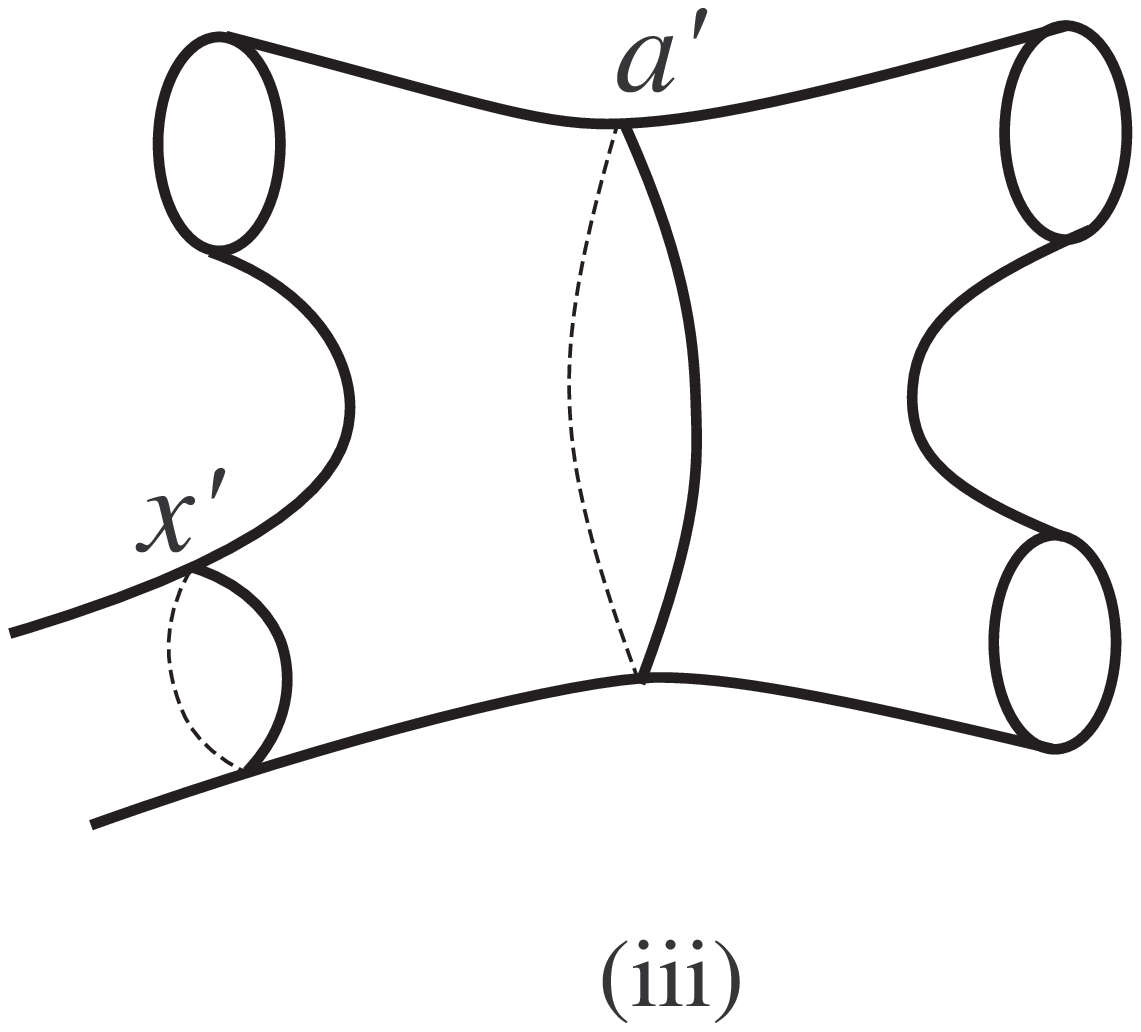} \hspace{0.25in} \epsfxsize=3.7in \epsfbox{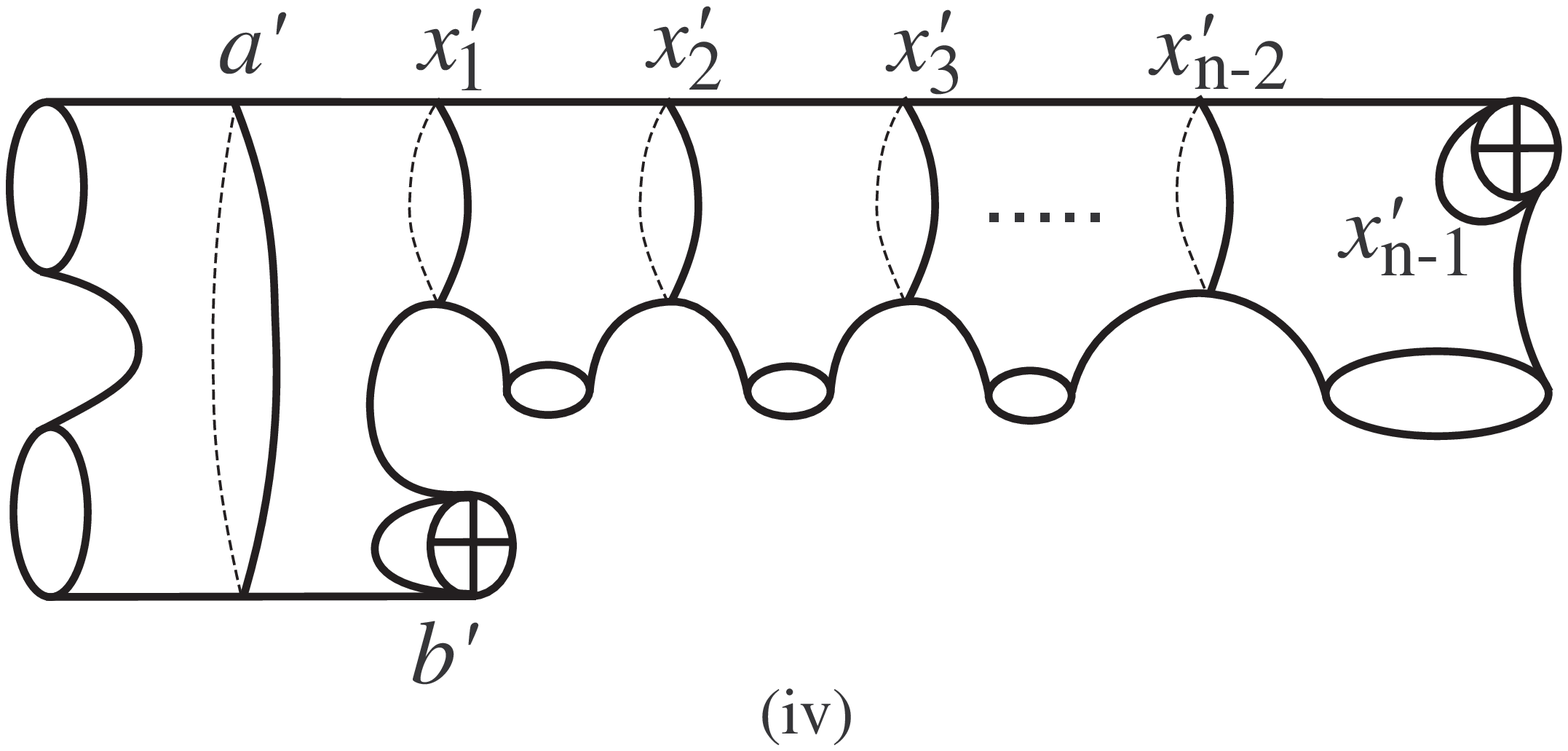}
\caption{Curve configuration III}
\label{Fig3}
\end{center}
\end{figure}

We complete $a$ to a curve configuration $a, x$ as shown in Figure \ref{Fig3} (i), using a boundary component $\partial_1$ of $N$.
Let $T$ be the subsurface of $N$ having $x$ and $\partial_1$ as its boundary as shown in Figure \ref{Fig3} (i). $T$ is a projective
plane with two boundary components. The curves $a$ and $x$ can be completed to a pair of pants decomposition $P$ on $N$ which
corresponds to a top dimensional maximal simplex in $\mathcal{C}(N)$. Let $P'$ be a set of pairwise disjoint curves representing
$\lambda([P])$ containing $a'$, and let $x' \in \lambda([x]) \cap P'$. By Lemma \ref{adjacent} and Lemma \ref{nonadjacent} we know that
$\lambda$ preserves adjacency and nonadjacency w.r.t. top dimensional maximal simplices. Since $a$ is adjacent to only $x$ w.r.t. $P$,
the curve $a'$ is adjacent to only $x'$ w.r.t. $P'$. Since $g=2$ and $n \geq 2$, there is a curve $z$ in $P$ such that $x$ is adjacent
to $z$ w.r.t. $P$ and $a$ is different from $z$. Since $a'$ is a separating curve and $a'$ is only adjacent to $x'$ w.r.t. $P'$, and
$x'$ is adjacent to at least two curves w.r.t. $P'$, we see that there is a four holed sphere, $R_1$, having $x'$ and three boundary
components of $N$ (if exists) as its boundary components such that $a'$ divides $R_1$ into two pair of pants as shown in Figure \ref{Fig3}
(iii). Hence, if $(g, n) = (2, 2)$ we get a contradiction as there are not enough boundary components.

Suppose $g =2, n \geq 3$. We complete $a$ to a pair of pants decomposition $P = \{a, b, x_1, \cdots, x_{n-1}\}$, which corresponds to a top
dimensional maximal simplex, as shown in Figure \ref{Fig3} (ii). In the figure we see that $a$ is adjacent to only $b$ and $x_1$ w.r.t.
$P$. Let $P'$ be a set of pairwise disjoint curves representing $\lambda([P])$ containing $a'$.
Let $b' \in \lambda([b]) \cap P'$, $x_i' \in \lambda([x_i]) \cap P'$ for $i = 1, 2, \cdots, n-1$.
By using that $a'$ is separating and $\lambda$ preserves adjacency and nonadjacency w.r.t. $P$,
we see that $P'$ is as shown in Figure \ref{Fig3} (iv). By choosing
curves $z, t$ as shown in Figure \ref{Fig3} (ii) we get a contradiction as in Lemma \ref{1-sided-co}.
Hence, $a'$ cannot be separating simple closed curve.\end{proof}

\begin{figure}
\begin{center}
\epsfxsize=2.7in \epsfbox{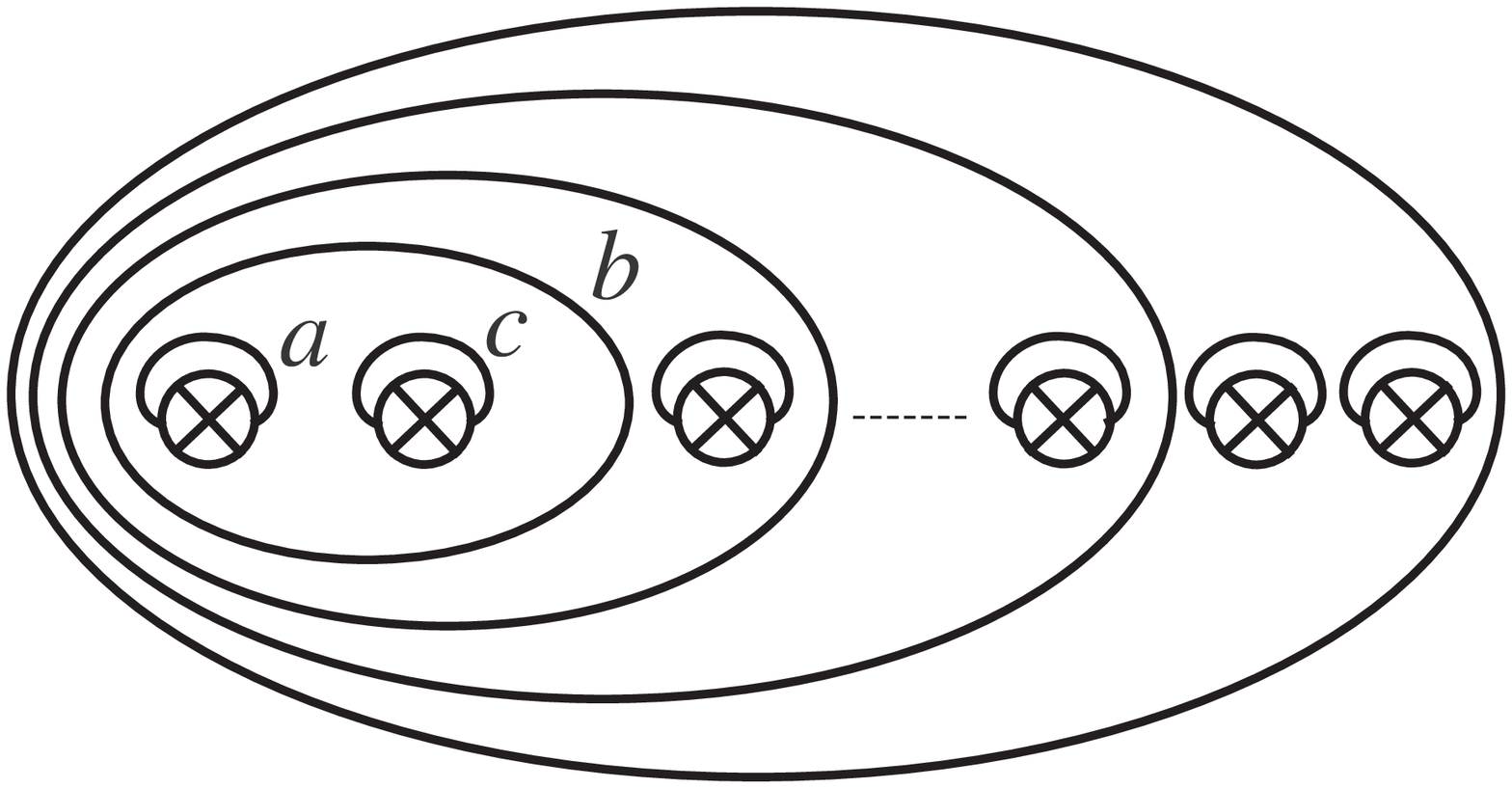} \hspace{0.1in}
\epsfxsize=2.7 in \epsfbox{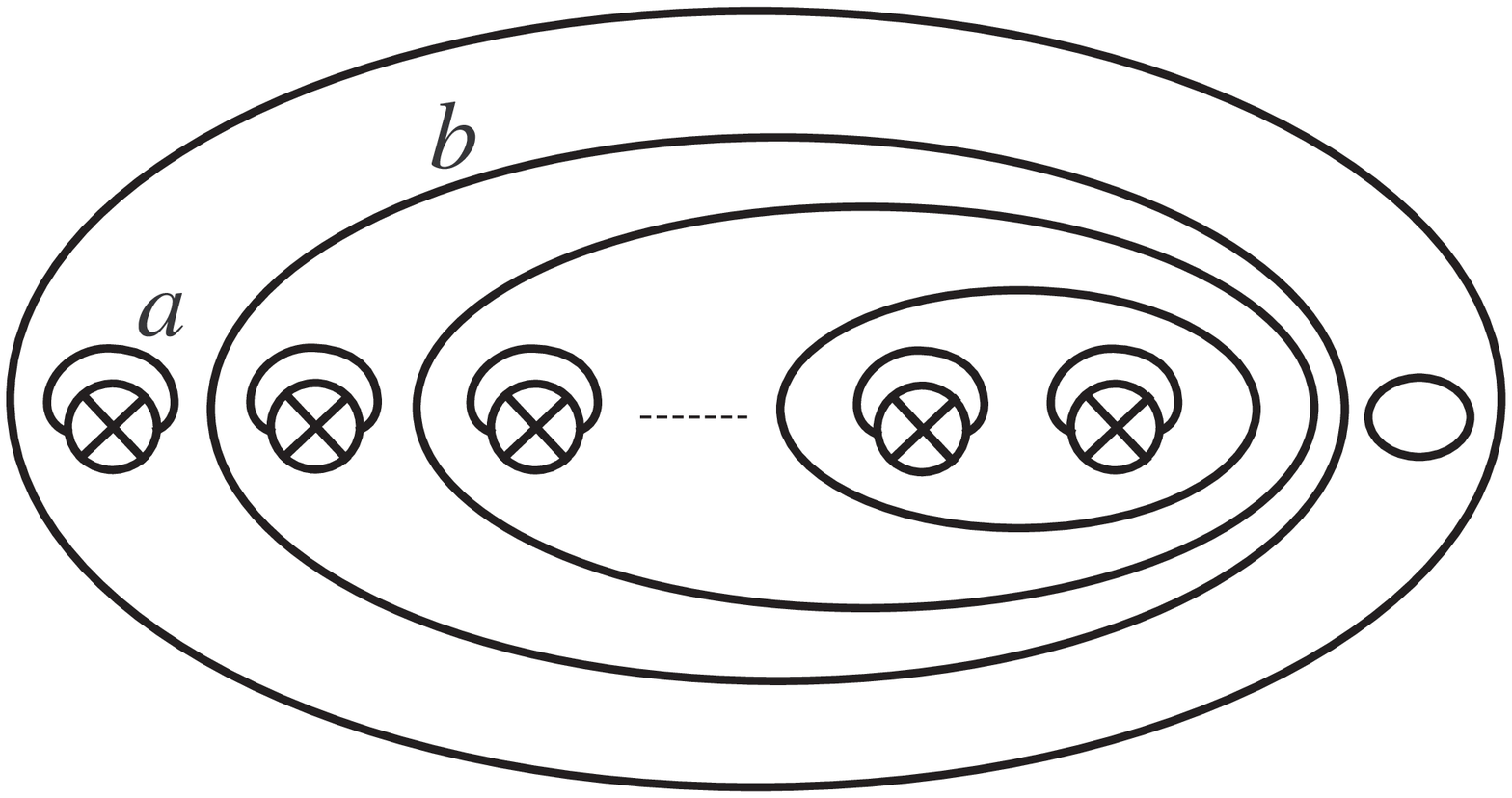}

\small{ \hspace{-0.1in} (i)  \hspace{2.4in} (ii)}

\epsfxsize=2.7in \epsfbox{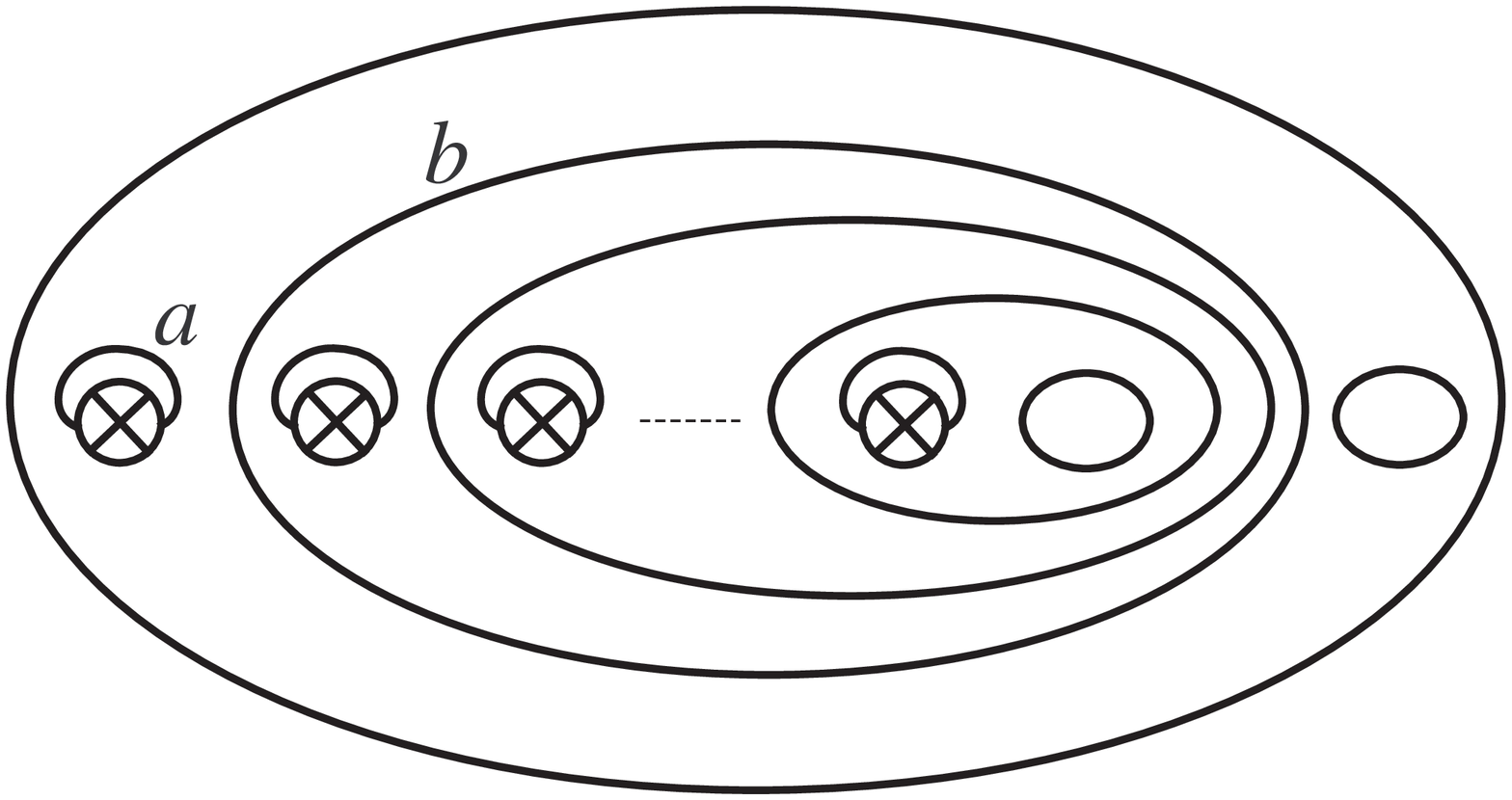}

\small{ \hspace{-0.1in} (iii)}

\caption{Pants decompositions}
\label{Fig4}
\end{center}
\end{figure}

\begin{lemma}
\label{1-sided-cn-11} Suppose that $g \geq 3$. Let $\lambda: \mathcal{C}(N) \rightarrow \mathcal{C}(N)$ be a simplicial map
that satisfies the connectivity property. If $a$ is a 1-sided simple closed curve on $N$ whose complement is nonorientable,
then $\lambda(a)$ is not the isotopy class of a separating simple closed curve on $N$.
\end{lemma}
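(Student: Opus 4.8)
The plan is to argue by contradiction along the lines of Lemma \ref{1-sided-cn-10}. Suppose $a' \in \lambda([a])$ represents a separating simple closed curve. The first step is to complete $a$ to a pair of pants decomposition $P$ realizing a top dimensional maximal simplex in $\mathcal{C}(N)$, whose existence is guaranteed by Lemma \ref{dim}, and to arrange the curves near $a$ so that $a$ is adjacent w.r.t. $P$ to only one or two curves, among them a $1$-sided curve $x$ lying in a subsurface $T$ which is a projective plane with two boundary components. The second boundary of $T$ is taken to be a boundary component of $N$ when $n \geq 1$, and another curve of $P$ when $n = 0$. Since Lemma \ref{dim} distinguishes the combinatorial type of top dimensional maximal simplices according to the parity of $g$, I would treat $g$ even and $g$ odd separately; in the odd case the hypothesis that the complement of $a$ is nonorientable is used to place a crosscap away from $a$, so that the $1$-sided curve $x$ inside $T$ can be arranged. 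The configurations in Figure \ref{Fig4} illustrate the required pants decompositions for the various ranges of $n$.

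Next I invoke Lemmas \ref{adjacent} and \ref{nonadjacent}: since $\lambda$ preserves adjacency and nonadjacency with respect to top dimensional maximal simplices, the image family $P'$, a set of pairwise disjoint curves representing $\lambda([P])$ and containing $a'$, has exactly the same adjacency pattern as $P$. Writing $x' \in \lambda([x]) \cap P'$, the separating hypothesis on $a'$, combined with this preserved pattern, pins down the piece of $N \setminus a'$ on the side of $x'$: exactly as in Lemma \ref{1-sided-cn-10} it must be a subsurface $N_1$ that is a projective plane with two boundary components, one of which is $a'$, and $x'$ must be a $1$-sided curve inside $N_1$.

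Finally I would choose two curves $z, t$ on $N$ that are disjoint from $a$ and from the second boundary of $T$, and that each intersect $x$ essentially, with $[a], [x], [z], [t]$ pairwise distinct; this is possible because $g \geq 3$ leaves ample room. Selecting representatives $z' \in \lambda([z])$ and $t' \in \lambda([t])$ meeting the relevant curves minimally and using injectivity (Lemma \ref{inj}) together with Lemma \ref{sup}, one gets that $z'$ and $t'$ are disjoint from $a'$ and intersect $x'$ essentially. Since any curve disjoint from the separating curve $a'$ lies in a single component of $N \setminus a'$, both $z'$ and $t'$ are forced into $N_1$. But a projective plane with two boundary components contains no two nonisotopic nontrivial simple closed curves meeting $x'$ essentially, by Scharlemann's result in \cite{Sc}, which is the desired contradiction; hence $a'$ cannot be separating.

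The step I expect to be the main obstacle is the uniform construction and bookkeeping of the first step together with the deduction that $a'$ separating forces the projective-plane-with-two-holes subsurface $N_1$. Handling all $n \geq 0$ and both parities of $g$ requires several explicit pants decompositions, and the closed case $n = 0$ is the most delicate, since there is no genuine boundary component of $N$ to anchor the subsurface $T$ and its image $N_1$. In that case the confinement of $z'$ and $t'$ must be forced entirely by curves of $P$ rather than by $\partial N$, so $z$ and $t$ must additionally be chosen disjoint from the second boundary curve of $T$; verifying that $a'$ cannot be accommodated except as a boundary of $N_1$ is the crux of the argument.
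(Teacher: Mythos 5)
Your plan for $n \geq 3$ is essentially the paper's own argument (its case (c)): complete $a$ to a top dimensional pants decomposition, use Lemmas \ref{adjacent} and \ref{nonadjacent} to pin down the image configuration, and then run the two-curve ($z$, $t$) argument with Scharlemann's result, exactly as in Lemmas \ref{1-sided-co} and \ref{1-sided-cn-10}. But the paper does \emph{not} use this argument when $n \leq 2$, and there is a genuine obstruction to making it work there --- precisely at the step you yourself flag as the ``crux.'' Suppose $n = 0$. For $z'$ and $t'$ to be trapped in a projective plane $N_1$ with two boundary components, both boundary circles of $N_1$ must be curves of $P'$, so $z$ and $t$ must be chosen disjoint from the corresponding curves of $P$; they then live in the component $C$ of the complement of those curves that contains $x$. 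Since $\lambda$ is injective and preserves adjacency and disjointness (Lemmas \ref{inj}, \ref{sup}, \ref{adjacent}), every curve of $P$ inside $C$ is joined to $x$ by a chain of adjacencies inside $C$, and its image is therefore a curve of $P'$ inside $N_1$. But $N_1$ contains exactly one curve of $P'$ in its interior, so $C$ contains exactly one curve of $P$, namely $x$; together with top-dimensionality of $P$ (any once-holed torus or Klein bottle region, or a four-holed sphere region bounded by two curves doubled back on themselves, would force $P$ below top dimension), this makes $C$ itself a projective plane with two boundary components. Then Scharlemann's result \cite{Sc} --- the very result you invoke in the image --- says the two nonisotopic curves $z, t$ crossing $x$ essentially do not exist in the domain. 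The shrinking of the subsurface around the one-sided curve from domain to image, which powers the $n \geq 3$ argument, is possible only because the adjacency pattern does not record how the boundary components of $N$ are distributed among the pairs of pants; with $n = 0$ there is nothing to redistribute, and for $n = 1, 2$ a similar count (or adjacency preservation itself: the single pair of pants of $N_1$ would create an adjacency between its two boundary curves that is absent in the domain) blocks the construction in the same way.

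What the paper does for small $n$ is different and much simpler, and this is the missing idea in your proposal: pure adjacency counting. For $n = 0$, $g \geq 4$, complete $a$ to a top dimensional $P$ with $a$ adjacent to exactly two curves; if $a'$ were separating and adjacent to only two curves of a top dimensional $P'$, then each side of $a'$ would be a pair of pants self-glued along its two free boundary circles, i.e.\ a once-holed torus or Klein bottle carrying a single two-sided curve, so $P'$ would consist of three two-sided curves --- contradicting the fact that a top dimensional decomposition of a closed nonorientable surface of genus $g \geq 4$ must contain $g$ one-sided curves. For $n = 1, 2$, $g \geq 3$, choose $P$ with $a$ adjacent to exactly one curve; a separating $a'$ is adjacent to at least one curve on each side (a side with no adjacency would have to be a pair of pants bounded by $a'$ and two components of $\partial N$, or a once-holed torus/Klein bottle of genus at most $2$, both impossible here), so $a'$ is adjacent to at least two curves, a contradiction. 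Finally, for $(g,n) = (3,0)$ there are no essential separating curves at all, so nothing needs proving. In short: your proposal is correct in spirit and matches the paper for $n \geq 3$, but the uniform Scharlemann strategy cannot close the cases $n \leq 2$, which require the counting arguments above.
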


\begin{proof} Let $a$ be a 1-sided simple closed curve on $N$ whose complement is nonorientable.
Let $a' \in \lambda([a])$. If $(g, n)=(3, 0)$ then there is
no nontrivial separating simple closed curve on $N$. So, the result follows. Assume that $a'$ is a separating simple
closed curve on $N$.

(a) If $n=0$ and $g \geq 4$, then we complete $a$ to a pants decomposition
$P$ such that $P$ corresponds to a top dimensional maximal simplex in $\mathcal{C}(N)$, and $a$ is adjacent to only two curves
w.r.t. $P$ (see Figure \ref{Fig4} (i)). Let $P'$ be a set of pairwise disjoint curves representing $\lambda([P])$ containing $a'$.
Since $\lambda$ is injective by Lemma \ref{inj}, $P'$ also corresponds to a top dimensional maximal simplex. Since adjacency and nonadjacency 
w.r.t. top dimensional maximal simplices are preserved by Lemma \ref{adjacent} and Lemma \ref{nonadjacent}, we see that $a'$ has to be 
adjacent to only two simple closed curves w.r.t. $P'$. This is impossible since $a'$ is a separating curve, $n=0$ and $g \geq 4$. 
So, we get a contradiction.

\begin{figure}
\begin{center}
\epsfxsize=3.8in \epsfbox{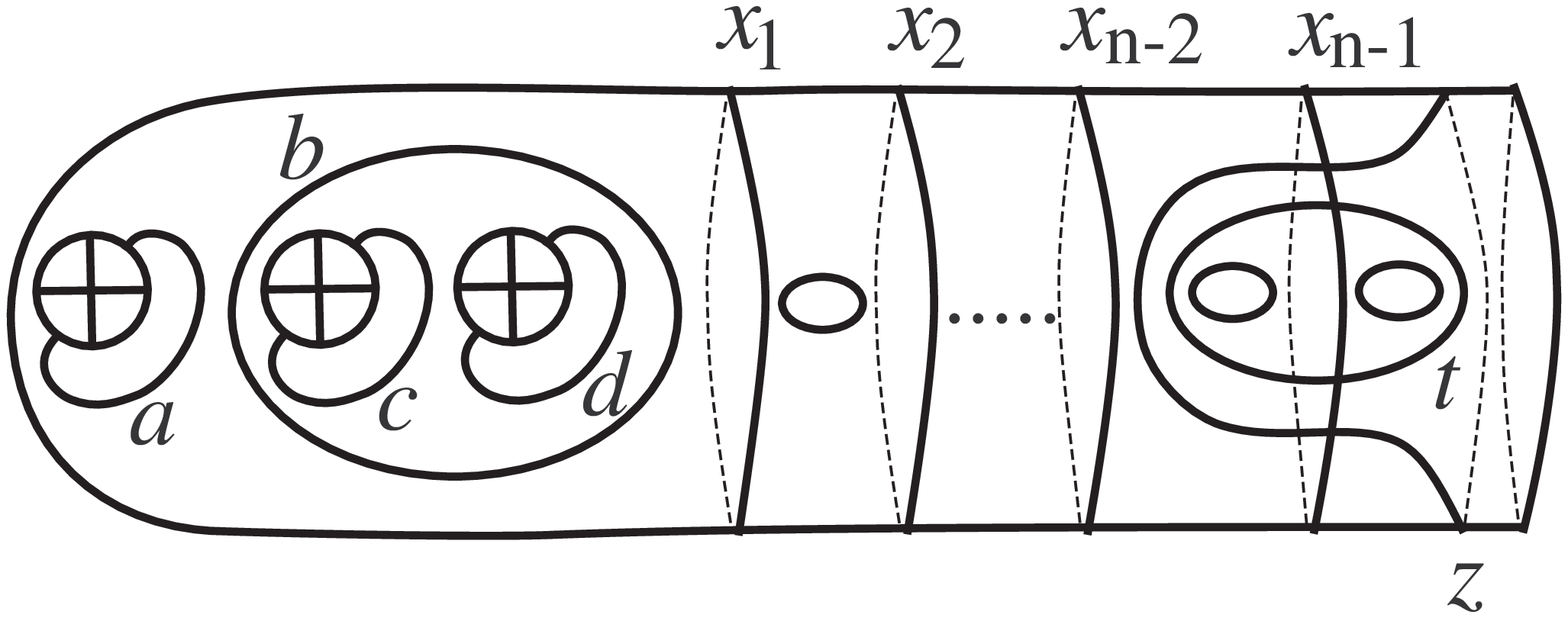}

\vspace{-0.4cm} (i) \vspace{0.5cm}

\epsfxsize=3.8in \epsfbox{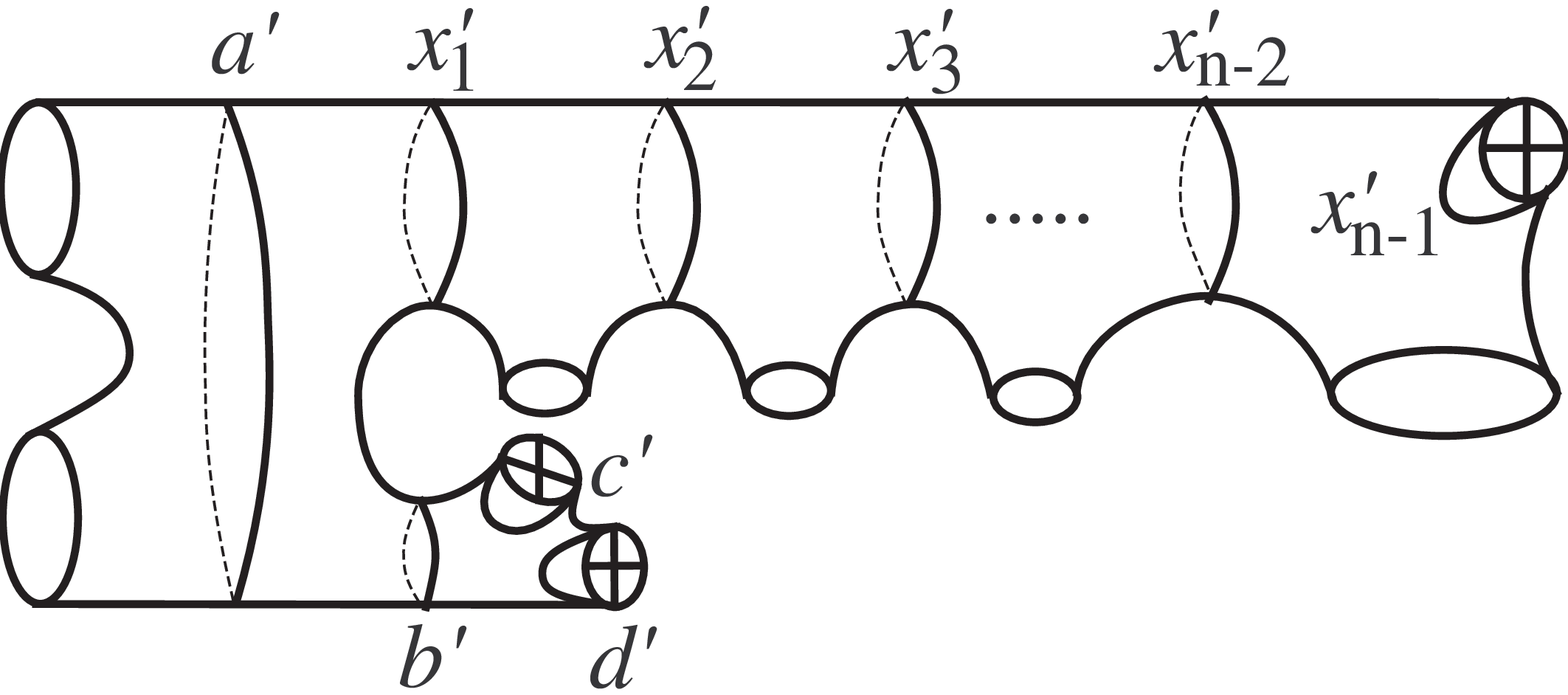}

\vspace{-0.4cm} (ii) \vspace{0.5cm}

\epsfxsize=4.6in \epsfbox{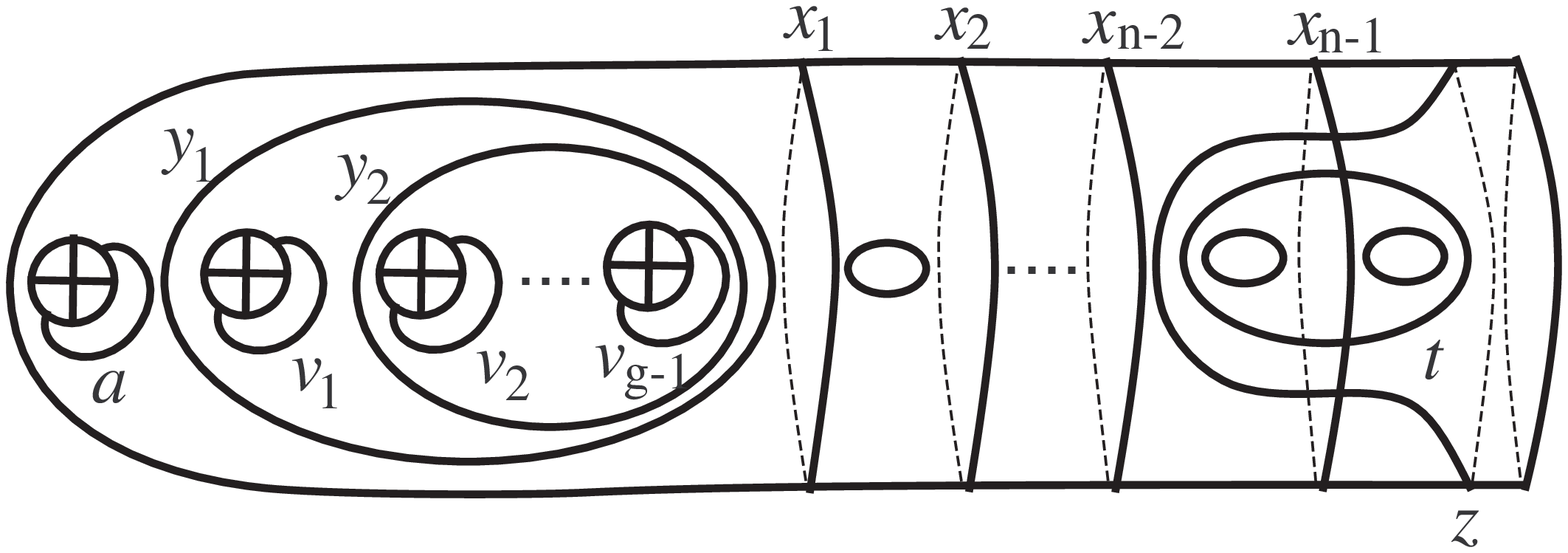}

\vspace{-0.4cm} (iii) \vspace{0.5cm}

\hspace{0.8cm} \epsfxsize=4.7in \epsfbox{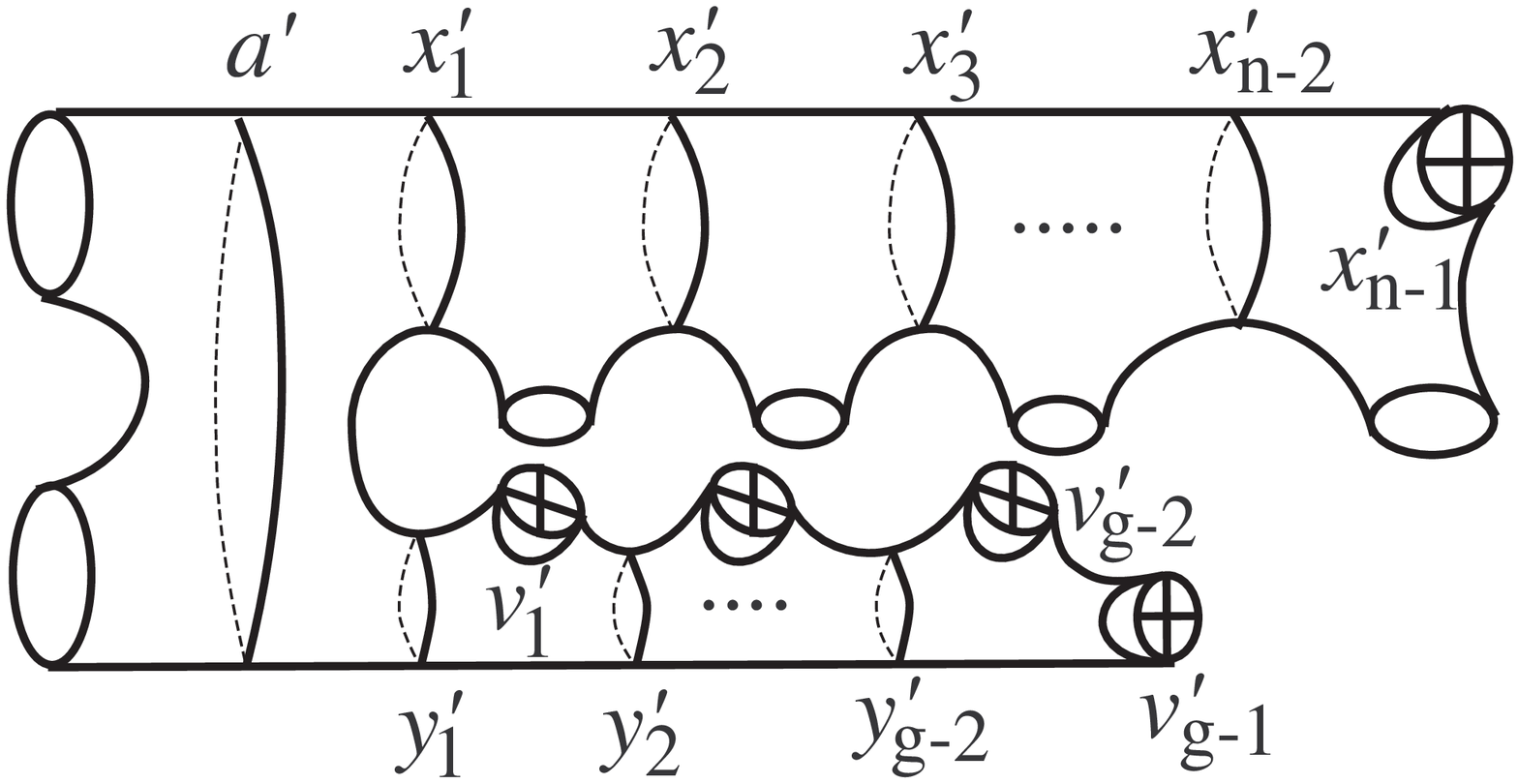}

\vspace{-0.1cm} (iv)  \vspace{0.5cm}

\caption{Curve configuration IV}
\label{Fig10}
\end{center}
\end{figure}

(b) If $n=1$ or $n = 2$, and $g \geq 3$, then we complete $a$ to a pair of pants
decomposition $P$ such that $P$ corresponds to a top dimensional maximal simplex in $\mathcal{C}(N)$, and $a$ is adjacent to only
one curve w.r.t. $P$ as shown in Figure \ref{Fig4} (ii) and Figure \ref{Fig4} (iii). Since adjacency and nonadjacency w.r.t. top
dimensional maximal simplices are preserved by Lemma \ref{adjacent} and Lemma \ref{nonadjacent}, we see that $a'$ has to be adjacent
to only one simple closed curve in the image pants decomposition which corresponds to a top dimensional maximal simplex. This is
impossible since $a'$ is separating, $g \geq 3$ and there is only one or two boundary components. So, we get a contradiction.

(c) If $n \geq 3$ and $g \geq 3$, then we get a contradiction as follows: If $g=3$, then we complete $a$ to a
pair of pants decomposition $P= \{a, b, c, d, x_1, \cdots, x_{n-1}\}$ as shown in Figure \ref{Fig10} (i). We see that $P$
corresponds to a top dimensional maximal simplex in $\mathcal{C}(N)$. Let $P'$ be a set of pairwise disjoint curves representing
$\lambda([P])$ containing $a'$. The set $P'$ also corresponds to a top dimensional maximal simplex in $\mathcal{C}(N)$. Let
$b' \in \lambda([b]) \cap P'$, $c' \in \lambda([c]) \cap P'$, $d' \in \lambda([d]) \cap P'$, and
$x_i' \in \lambda([x_i]) \cap P'$ for $i= 1, \cdots, n-1$. By using that adjacency and nonadjacency are preserved w.r.t. 
top dimensional maximal simplices, we get $P'= \{a', b', c', d', x'_1, \cdots, x'_{n-1}\}$ as shown in Figure \ref{Fig10} (ii). 
Let $z$ and $t$ be as shown in Figure \ref{Fig10} (i). By using the curves $z, t$, we get a contradiction as in the proof of 
Lemma \ref{1-sided-cn-10}. Similarly, if $g \geq 4$, we complete $a$ to a pair of pants decomposition $P$ which corresponds 
to a top dimensional maximal simplex as shown in Figure \ref{Fig10} (iii), and we get the corresponding pair of pants decomposition 
as shown in Figure \ref{Fig10} (iv). Using the curves $z, t$ shown in Figure \ref{Fig10} (iii), we get a contradiction as in the 
proof of Lemma \ref{1-sided-cn-10}.

Hence, $a'$ cannot be separating simple closed curve.\end{proof}

\begin{lemma}
\label{1-sided-cn-2} Let $g \geq 2$. Suppose that $(g, n) = (3, 0)$ or $g+n \geq 4$.
Let $\lambda: \mathcal{C}(N) \rightarrow \mathcal{C}(N)$ be a simplicial map that satisfies the connectivity property.
If $a$ is a 1-sided simple closed curve on $N$ whose complement is nonorientable, then $\lambda(a)$ is the isotopy
class of a 1-sided simple closed curve whose complement is nonorientable.
\end{lemma}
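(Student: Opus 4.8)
The plan is to show that a representative $a'$ of $\lambda([a])$ must be $1$-sided with nonorientable complement by exploiting the special combinatorial position that $1$-sided curves occupy inside \emph{top dimensional} maximal simplices. Since $a$ is $1$-sided with nonorientable complement and $g\geq 2$, Lemma~\ref{1-sided-cn-10} (when $g=2$, hence $n\geq 2$) or Lemma~\ref{1-sided-cn-11} (when $g\geq 3$, including $(g,n)=(3,0)$) already tells us that $\lambda([a])$ is not the class of a separating curve. It therefore remains only to rule out that $a'$ is $2$-sided nonseparating, and that $a'$ is $1$-sided with orientable complement.

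The key topological input is a description of top dimensional pants decompositions. If a pants decomposition $Q$ of $N$ has $s_1$ one-sided and $s_2$ two-sided curves, then cutting along $Q$ produces $g+n-2$ pairs of pants, and counting boundary circles gives $2s_2+s_1=3g+2n-6$; hence the number of curves $s_1+s_2=\tfrac12(s_1+3g+2n-6)$ is strictly increasing in $s_1$. Comparing with the maximal dimension $b_r$ of Lemma~\ref{dim}, the top dimensional maximal simplices are exactly those with $s_1=g$, i.e. those $Q$ in which every crosscap is isolated by a one-sided curve. For such a $Q$, cutting along its $g$ one-sided curves yields an orientable planar surface, so each of its two-sided curves separates $N$. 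I will use two consequences: a top dimensional pants decomposition contains exactly $g$ one-sided curves, and every two-sided curve in it is separating.

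With this in hand I would complete $a$ to a top dimensional pants decomposition $P$ of $N$; this is possible because $N\setminus a$ is nonorientable of genus $g-1\geq 1$, so after isolating the remaining $g-1$ crosscaps by one-sided curves and filling in separating curves, $P$ has $g$ one-sided curves. Since $\lambda$ is injective (Lemma~\ref{inj}) and sends top dimensional maximal simplices to top dimensional maximal simplices, a set $P'$ of pairwise disjoint curves representing $\lambda([P])$ and containing $a'$ is again a top dimensional pants decomposition. If $a'$ were $2$-sided, then by the structural fact it would be separating, contradicting the first paragraph; hence $a'$ is $1$-sided. If $a'$ were $1$-sided with orientable complement, then every curve disjoint from $a'$ would lie in the orientable surface $N\setminus a'$ and so be $2$-sided, making $a'$ the unique one-sided curve of $P'$; but $P'$ is top dimensional and must contain $g\geq 2$ one-sided curves, a contradiction. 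Therefore $a'$ is $1$-sided with nonorientable complement.

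The main obstacle is the structural characterization of top dimensional pants decompositions, namely that maximizing the number of curves forces all $g$ crosscaps to be isolated, so that the one-sided curves number exactly $g$ and every two-sided curve is separating. Once this is extracted from Lemma~\ref{dim} together with the Euler characteristic count, the rest is essentially bookkeeping: the separating case is removed by Lemmas~\ref{1-sided-cn-10} and~\ref{1-sided-cn-11}, the $2$-sided case by counting sidedness in $P'$, and the orientable-complement case by counting one-sided curves in $P'$. I would finally verify that the hypotheses $g\geq 2$ with $(g,n)=(3,0)$ or $g+n\geq 4$ are exactly what is needed both for $P$ to exist and for the invoked lemmas to apply across all the listed cases.
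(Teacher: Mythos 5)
Your proposal is correct and follows essentially the same route as the paper: Lemmas \ref{1-sided-cn-10} and \ref{1-sided-cn-11} dispose of the separating case, and the fact that the injective map $\lambda$ (Lemma \ref{inj}) carries top dimensional maximal simplices to top dimensional maximal simplices disposes of the 2-sided nonseparating and 1-sided-with-orientable-complement cases. The only difference is one of detail: the paper simply asserts that such curves cannot be vertices of a top dimensional maximal simplex, whereas you justify this with the Euler-characteristic count showing that a top dimensional pants decomposition has exactly $g$ one-sided curves and only separating two-sided curves, which is a correct proof of the fact the paper leaves unproved.
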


\begin{proof} Let $a$ be a 1-sided simple closed curve whose complement is nonorientable.
Let $a' \in \lambda([a])$. By Lemma \ref{1-sided-cn-10} and Lemma \ref{1-sided-cn-11}, $a'$ cannot be a separating
simple closed curve.

Since $a$ is a 1-sided simple closed curve whose complement is nonorientable, $[a]$ is the vertex of a top dimensional maximal
simplex in $\mathcal{C}(N)$. Since $\lambda$ is injective by Lemma \ref{inj}, the image of this simplex is a top dimensional maximal
simplex containing $[a']$. If $a'$ is a 1-sided simple closed curve whose complement is orientable or a 2-sided nonseparating simple
closed curve, then $[a']$ can't be a vertex of a top dimensional maximal simplex in $\mathcal{C}(N)$. Hence, $a'$ is a 1-sided simple 
closed curve whose complement is nonorientable.\end{proof}\\

The following theorem is given by the author in \cite{Ir5}.

\begin{theorem}
\label{Ir} Let $N$ be a compact, connected, nonorientable surface of genus $g$ with $n$ boundary components. Suppose
that either $(g, n) \in \{(1, 0), (1, 1), (2, 0)$, $(2, 1), (3, 0)\}$ or $g + n \geq 5$. If
$\tau : \mathcal{C}(N) \rightarrow \mathcal{C}(N)$ is a superinjective simplicial map, then $\tau$ is
induced by a homeomorphism $h : N \rightarrow N$.\end{theorem}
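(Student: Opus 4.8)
The plan is to follow the now-standard strategy for such rigidity results: first show that $\tau$ preserves enough combinatorial-topological structure of $\mathcal{C}(N)$, then pin a homeomorphism to a single pair of pants decomposition, and finally verify that this homeomorphism induces $\tau$ on all of $\mathcal{C}(N)$. A convenient starting point is that a superinjective map automatically satisfies the connectivity property (Lemma~\ref{imply}), so every property established above for connectivity-property maps is at our disposal: in the generic range $\tau$ is injective (Lemma~\ref{inj}), it preserves disjointness and essential intersection (Lemma~\ref{sup}), it preserves adjacency and nonadjacency with respect to top dimensional maximal simplices (Lemmas~\ref{adjacent} and \ref{nonadjacent}), and it sends a $1$-sided curve with nonorientable complement to one of the same type (Lemma~\ref{1-sided-cn-2}), with the remaining $1$-sided cases handled by Lemmas~\ref{1-sided-co}--\ref{1-sided-cn-11}.

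First I would complete the preservation of topological type. For each isotopy class one must recover whether it is $1$-sided or $2$-sided, separating or nonseparating, and whether its complement is orientable or nonorientable. These are detectable combinatorially: the set of dimensions of maximal simplices containing a vertex is governed by Lemma~\ref{dim}, and the local adjacency pattern of a vertex inside a top dimensional pants decomposition distinguishes the remaining types. Since $\tau$ preserves top dimensional simplices, their dimensions, and adjacency, it must preserve each of these invariants; the $1$-sided lemmas above already cover the hardest cases, and the $2$-sided nonseparating and $2$-sided separating cases follow by choosing analogous model configurations and applying Lemma~\ref{sup}.

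Next I would fix a pants decomposition $P$ representing a top dimensional maximal simplex. By injectivity together with the preservation results, $\tau([P])$ is represented by a pants decomposition $P'$ whose adjacency graph, curve types, and pants types match those of $P$. By the change-of-coordinates principle on $N$, there is a homeomorphism $h : N \to N$ carrying $P$ to $P'$ compatibly with this labeling. Replacing $\tau$ by the vertex map induced by $h^{-1}$ composed with $\tau$, I may assume $\tau$ fixes each vertex of $[P]$. It then remains to prove the rigidity statement that a map fixing $[P]$ and preserving disjointness, intersection, and adjacency fixes every vertex of $\mathcal{C}(N)$.

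The rigidity step is where I expect the main difficulty. The idea is to examine curves disjoint from all but a few elements of $P$: such a curve lives in a small subsurface (a four holed sphere, a one holed torus, a one or two holed Klein bottle, or a projective plane with two boundary components), where only finitely many isotopy classes arise and the intersection pattern with the fixed curves of $P$ determines the class uniquely; bootstrapping through overlapping such subsurfaces propagates ``fixed'' from $[P]$ to all of $\mathcal{C}(N)$. On a nonorientable surface the delicate points are disentangling the two kinds of $1$-sided curves that can coexist in a single subsurface and ruling out ambiguous images, and here Scharlemann's result that a projective plane with two holes carries essentially a unique nontrivial curve meeting a given one, already exploited in Lemma~\ref{1-sided-co}, is the key tool. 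Finally, the exceptional low-complexity cases $(g,n)\in\{(1,0),(1,1),(2,0),(2,1),(3,0)\}$ must be treated directly, since $\mathcal{C}(N)$ is then too small or too sparse for the generic bootstrap; once every vertex is shown fixed, $\tau$ equals the map induced by $h$, so $\tau$ is induced by a homeomorphism.
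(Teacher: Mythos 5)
You should first be aware of a structural point: the paper never proves Theorem \ref{Ir} at all. It is imported verbatim from \cite{Ir5}, and the present paper only develops the connectivity-property analogues (Lemmas \ref{inj}--\ref{1-sided-cn-2}) of the lemmas in that earlier work. So your proposal is in effect an attempt to reconstruct the argument of \cite{Ir5}. Its first half is reasonable and consistent with the paper's own logic: Lemma \ref{imply} converts superinjectivity into the connectivity property, after which injectivity, preservation of disjointness (Lemma \ref{sup}), of adjacency (Lemmas \ref{adjacent}, \ref{nonadjacent}), and of $1$-sided curves with nonorientable complement (Lemma \ref{1-sided-cn-2}) are available. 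Note, however, that Lemma \ref{imply} requires $(g,n)=(3,0)$ or $g+n\geq 4$, so the exceptional cases $(1,0),(1,1),(2,0),(2,1)$ that you set aside for ``direct treatment'' lie entirely outside your framework and never receive an argument.

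The decisive gap is in your rigidity step. After composing with $h^{-1}$ so that $\tau$ fixes every vertex of $[P]$, you claim the remaining task is to prove that a map fixing $[P]$ vertexwise and preserving disjointness, intersection, and adjacency must fix every vertex of $\mathcal{C}(N)$. That statement is false. The simplicial map induced by a Dehn twist about a $2$-sided element of $P$ fixes each vertex of $[P]$, preserves all intersection and adjacency relations (it is induced by a homeomorphism), and yet moves every isotopy class crossing that curve essentially. This is precisely why proofs of this type --- Ivanov's, Korkmaz's, Luo's, and the one in \cite{Ir5} --- must also pin down the images of a system of curves dual to $P$, adjusting $h$ by twists along elements of $P$, before any vertexwise rigidity can hold. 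Your justification of the bootstrap is also incorrect: a four-holed sphere or a one-holed torus or Klein bottle piece carries \emph{infinitely} many isotopy classes of essential curves, and the pattern of disjointness with the fixed curves of $P$ cannot distinguish a curve from its images under twists about curves of $P$; Scharlemann's uniqueness result is special to the twice-holed projective plane and does not extend to these other pieces. Supplying the dual-curve normalization and the intersection-pattern induction that replace these false claims is not a routine repair --- it is the actual content of \cite{Ir5} --- so the central step of your proof is missing.
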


The main result:

\begin{theorem} Let $N$ be a compact, connected, nonorientable surface of genus $g$ with
$n$ boundary components. Suppose that either $(g, n) \in \{(1, 0), (1, 1), (2, 0)$, $(2, 1), (3, 0)\}$ or $g + n \geq 5$.
If $\lambda: \mathcal{C}(N) \rightarrow \mathcal{C}(N)$ is a simplicial map that satisfies the connectivity property,
then $\lambda$ is induced by a homeomorphism $h : N \rightarrow N$.\end{theorem}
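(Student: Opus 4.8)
The plan is to reduce the statement to the superinjective case that is already settled in Theorem \ref{Ir}. The observation driving everything is that the conclusion of Lemma \ref{sup} is word-for-word the definition of superinjectivity: it asserts that for every pair of nonisotopic simple closed curves $a, b$ one has $i([a], [b]) = 0$ if and only if $i(\lambda([a]), \lambda([b])) = 0$. Thus, on any surface to which Lemma \ref{sup} applies, a simplicial map satisfying the connectivity property is automatically a superinjective simplicial map, and Theorem \ref{Ir} then produces a homeomorphism $h$ with $\lambda([a]) = [h(a)]$ for every vertex $[a]$.

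First I would separate the hypothesis into the configurations covered by Lemma \ref{sup} and the exceptional ones. Lemma \ref{sup} (and with it Lemma \ref{inj}) requires $(g, n) = (3, 0)$ or $g + n \geq 4$; among the configurations permitted by the theorem, both $(g, n) = (3, 0)$ and $g + n \geq 5$ satisfy this. For these, Lemma \ref{inj} gives injectivity and Lemma \ref{sup} gives superinjectivity at no extra cost, so $\lambda$ is superinjective and Theorem \ref{Ir} applies verbatim. This handles every ``generic'' case of the theorem using only the lemmas already in hand, with no further geometric input.

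It remains to treat the four small configurations $(1, 0)$, $(1, 1)$, $(2, 0)$, $(2, 1)$, which are exactly the cases with $g + n \leq 3$ and $(g, n) \neq (3, 0)$, where the reduction above is unavailable. For $(1, 0)$ and $(1, 1)$ the surface is the projective plane and the M\"obius band; every essential simple closed curve is isotopic to the unique one-sided curve, so $\mathcal{C}(N)$ has at most one vertex and the conclusion holds trivially with $h$ the identity. The substantive cases are $(2, 0)$, the Klein bottle, and $(2, 1)$, where $\mathcal{C}(N)$ is nonempty but has an explicitly describable, highly restricted structure. Here I would list the isotopy classes of essential curves and their intersection pattern explicitly and, using Lemma \ref{adjacent} and Lemma \ref{nonadjacent}, show by direct inspection that $\lambda$ is injective and preserves disjointness in both directions, so that $\lambda$ is superinjective and Theorem \ref{Ir} concludes.

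The main obstacle is precisely this last step. For the Klein bottle and for $(2, 1)$ the proof of Lemma \ref{inj} breaks down: given two intersecting curves $a, b$, one cannot in general find a curve $c$ disjoint from $a$ but meeting $b$, simply because these surfaces are too small to contain it. Consequently superinjectivity cannot be extracted from the connectivity property by the uniform argument used for the larger surfaces, and one is forced to argue from the explicit census of curves on each of these two surfaces. I expect this finite but fiddly case analysis, rather than any conceptual difficulty, to be where the real work lies.
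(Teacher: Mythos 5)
There is a genuine gap, and it sits exactly at the step you describe as ``the observation driving everything.'' The conclusion of Lemma \ref{sup} is \emph{not} word-for-word the definition of superinjectivity: the definition quantifies over all pairs of vertices $[a],[b]$, including the pair $[a]=[b]$, whereas Lemma \ref{sup} is stated and proved only for \emph{nonisotopic} curves. On a nonorientable surface the diagonal case is substantive, not vacuous: $i([a],[a])=1$ when $a$ is $1$-sided and $i([a],[a])=0$ when $a$ is $2$-sided, so the instance $[a]=[b]$ of superinjectivity says precisely that the map preserves sidedness of curves. The connectivity property gives no information whatsoever on the diagonal (no vertex is joined to itself by an edge), so after Lemma \ref{inj} and Lemma \ref{sup} you still have no argument ruling out that $\lambda$ sends a $1$-sided curve to a $2$-sided curve. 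Ruling this out is not a formality; it is the main content of the paper, namely Lemmas \ref{1-sided-co}, \ref{1-sided-cn-10}, \ref{1-sided-cn-11} and \ref{1-sided-cn-2}, with their curve-configuration arguments, and your proposal never invokes or replaces them. Consistently with this, the paper's own proof does \emph{not} apply Theorem \ref{Ir} as a black box: it re-runs the proof from \cite{Ir5}, substituting Lemma \ref{sup}, Lemma \ref{1-sided-co} and Lemma \ref{1-sided-cn-2} at the points where superinjectivity (in particular its diagonal consequence, preservation of $1$-sidedness) was used. If your reduction were valid, essentially the entire body of the paper would be superfluous, and the theorem would be a two-line corollary of Theorem \ref{Ir}; the fact that the author (prompted by a referee) wrote a separate paper is a strong hint that the two notions do not coincide for free.

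Two further points. First, even if one insisted on reading ``two vertices'' in the definition of superinjectivity as ``two distinct vertices,'' you could not cite Theorem \ref{Ir} verbatim without verifying that the proof in \cite{Ir5} never uses the diagonal property of superinjective maps --- which is the same missing work in disguise. Second, in your treatment of the exceptional cases $(2,0)$ and $(2,1)$ you propose to use Lemma \ref{adjacent} and Lemma \ref{nonadjacent}, but their hypotheses ($(g,n)=(3,0)$ or $g+n\geq 4$) exclude exactly these two surfaces, so that route is unavailable as stated; for the Klein bottle one can argue directly on the finite complex (three vertices, one edge) and exhibit the homeomorphisms by hand, but $(2,1)$, whose complex is infinite, needs genuine work rather than ``finite but fiddly'' inspection. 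What is correct in your proposal: the case split itself, the trivial cases $(1,0)$ and $(1,1)$, and the observation that $(3,0)$ and $g+n\geq 5$ fall under the hypotheses of Lemmas \ref{inj} and \ref{sup}; what is missing is precisely the sidedness-preservation statement that separates the connectivity property from superinjectivity on nonorientable surfaces.
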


\begin{proof} The proof follows as in the proof of Theorem \ref{Ir} given by the author in \cite{Ir5}, by using Lemma \ref{sup},
Lemma \ref{1-sided-co} and Lemma \ref{1-sided-cn-2}.\end{proof}\\\\

{\bf Acknowledgments}
\vspace{0.3cm}

We thank Mustafa Korkmaz for some discussions.

eirmak@bgsu.edu

Bowling Green State University

Department of Mathematics and Statistics

Bowling Green, 43403, OH, USA
\end{document}